\numberwithin{equation}{section}			
\newtheorem{theoremintro}{Theorem}
\newtheorem{theorem}{Theorem}[section]
\newtheorem{proposition}[theorem]{Proposition}
\newtheorem{lemma}[theorem]{Lemma}
\newtheorem{corollary}[theorem]{Corollary}
\newtheorem*{theorem*}{Theorem}
\newtheorem*{proposition*}{Proposition}
\newtheorem*{lemma*}{Lemma}
\newtheorem*{corollary*}{Corollary}
\newtheorem*{conjecture*}{Conjecture}
\newtheorem*{question*}{Question}
\newtheorem*{remark*}{Remark}
\newtheorem*{definition*}{Definition}
\renewcommand{\leq}{\leqslant}	
\renewcommand{\geq}{\geqslant}	
\newcommand{\N}{\mathbb{N}}		
\newcommand{\Z}{\mathbb{Z}}		
\newcommand{\R}{\mathbb{R}}		
\newcommand{\C}{\mathbb{C}}		
\newcommand{\T}{\mathbb{T}}		
\newcommand{\eps}{\varepsilon}			
\newcommand{\wt}{\widetilde}		
\newcommand{\wh}{\widehat}			
\newcommand{\E}{\mathbb{E}}			
\newcommand{\bfz}{\mathbf{z}}
\newcommand{\bfn}{\mathbf{n}}
\newcommand{\bfM}{\boldsymbol{M}}
\newcommand{\bfgamma}{\boldsymbol{\gamma}}
\begin{document}

\title{Logarithmic bounds for translation-invariant equations in squares}

\author{Kevin Henriot}

\date{}

\begin{abstract}
	We show that the equation
	$\lambda_1 n_1^2 + \dotsb + \lambda_s n_s^2 = 0$
	admits non-trivial solutions in any subset of $[N]$
	of density $(\log N)^{-c_s}$, provided that $s \geq 7$
	and the coefficients $\lambda_i \in \Z \smallsetminus \{0\}$
	sum to zero and satisfy certain sign conditions.
	This improves upon previous known density bounds
	of the form $(\log\log N)^{-c}$.
\end{abstract}

\maketitle

\section{Introduction}
\label{sec:intro}

We are interested in quantitative quadratic analogues
of Roth's theorem~\cite{roth:roth}, which asserts
the existence of non-trivial
three-term arithmetic progressions
in subsets of $[N]$ of density at least $(\log\log N)^{-1}$.
It was shown in the late 1980s by Heath-Brown~\cite{heath-brown:roth}
and Szemerédi~\cite{szemeredi:roth} that better bounds of the
form $(\log N)^{-c}$ hold for this theorem.
A well-studied nonlinear analogue is Sarközy's theorem~\cite{sarkozy:sark},
which establishes the existence of square differences
in sets of density $(\log N)^{-1/3+o(1)}$,
however it is less directly relevant to our work, since we
consider equations with all variables lying in a
dense set.

Roth's method and subsequent improvements generalize
readily to all translation-invariant equations,
that is to say all equations of the form
\begin{align}
\label{eq:intro:tslinveq}
	\lambda_1 x_1 + \dots + \lambda_s x_s = 0,
\end{align}
where $\lambda_i \in \Z \smallsetminus \{0\}$,
$\lambda_1 + \dots + \lambda_s = 0$
and $x_1,\dots,x_s$ are integer variables.
From an additive number-theoretic point of view,
it is natural to ask whether~\eqref{eq:intro:tslinveq}
is solvable in a dense subset of an arithmetic set instead,
such as that of the primes or the squares.
In the case of primes,
the analogue of Roth's theorem was shown to hold
by Green~\cite{green:rothprimes}, who for that purpose developped
the now-standard transference principle,
and used restriction estimates for primes in arithmetic progressions
akin to those of Bourgain~\cite{bourgain:restrsquares} for primes.
By contrast, the case of squares can be handled
directly, although rather uneconomically,
by invoking the formidable
bounds of Gowers~\cite{gowers:sz}
for Szemerédi's theorem.
Indeed, let $x_i = n_i^2$ in~\eqref{eq:intro:tslinveq} 
and consider the larger system
\begin{align}
\label{eq:intro:quadsyst}
	\begin{split}
		\lambda_1 n_1 + \dotsb + \lambda_s n_s &= 0, \\
		\lambda_1 n_1^2 + \dotsb + \lambda_s n_s^2 &= 0.	
	\end{split}
\end{align}
in variables $n_1,\dots,n_s$, which has the crucial
property of being invariant under translation and dilation.
We call a solution $(n_1,\dots,n_s)$ to the above non-trivial
when the $n_i$ are all distinct.
Assuming the existence of a non-trivial integer solution
$(m_1,\dots,m_s)$ to~\eqref{eq:intro:quadsyst},
Gowers' argument~\cite{gowers:sz} enables one to locate
a pattern of the form $(x + d m_1,\dots,x + d m_s)$
with $x,d \in \N$ in any subset of $[N]$ of density
at least $C_s(\log\log N)^{-c_s}$ with $c_s = 2^{-2^{s+9}}$,
and this pattern again satisfies~\eqref{eq:intro:quadsyst}.

However, for large $s$,
one can expect the system~\eqref{eq:intro:quadsyst}
to be governed by Fourier analysis rather than higher-order uniformity,
and the quantitative bounds to be superior.
The work of Smith~\cite{smith:diagonal}, who initiated
the study of the system~\eqref{eq:intro:quadsyst}
in dense sets, and that of Keil~\cite{keil:diagonal}
progressively confirmed both of these expectations.
These two authors pursued a density increment strategy
made possible by the translation and dilation invariance of the system,
where at each step the set either contains the expected number of solutions
given by the classical circle method, or correlates
with a small arithmetic progression.
Smith relied on quadratic Fourier analysis
and on the Hardy-Littlewood circle method to
show that the system~\eqref{eq:intro:quadsyst} admits
non-trivial solutions in sets of density
as low as $(\log\log N)^{- 8 \cdot 10^{-7}}$, for $s \geq 9$
and certain conditions on the coefficients.
Keil later simplified Smith's approach to use only linear Fourier analysis,
and reduced the necessary number of variables to $7$,
respectively by using appropriately the technique of linearization of a quadratic
from Gowers' work~\cite{gowers:sz}
and by incorporating (as well as giving a new proof of)
certain restriction estimates of Bourgain~\cite{bourgain:restrlattice} for lattice sets.

\begin{theoremintro}[Keil]
\label{thm:intro:keilthm}
	Assume that $s \geq 7$ and 
	$\lambda_1,\dots,\lambda_s \in \Z \smallsetminus \{0\}$
	are such that ${\sum_{i=1}^s \lambda_i = 0}$, and that
	at least two of these coefficients are positive
	and at least two are negative.
	There exists a positive constant $C$ depending 
	on $s$ and $\lambda_1,\dots,\lambda_s$
	such that the following holds.
	Suppose that $A$ is a subset of $[N]$ of density
	\begin{align*}
		\delta \geq C(\log\log N)^{-1/15}.
	\end{align*}
	Then $A$ contains a non-trivial solution to the
	system of equations~\eqref{eq:intro:quadsyst}.
\end{theoremintro}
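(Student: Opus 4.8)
\emph{Proof strategy.} The plan is to run a density-increment argument in the spirit of Smith~\cite{smith:diagonal} and Keil~\cite{keil:diagonal}, using only linear Fourier analysis — that is, exponential sums weighted by the quadratic phase $e(\alpha n + \beta n^2)$ — together with the Hardy--Littlewood circle method, and exploiting the invariance of~\eqref{eq:intro:quadsyst} under every substitution $n_i \mapsto x + d\,n_i$. For $B\subseteq[M]$ write $f_B(\alpha,\beta)=\sum_{n\in B}e(\alpha n+\beta n^2)$, so that the number of solutions of~\eqref{eq:intro:quadsyst} with all $n_i$ in $B$ equals
\begin{align*}
	\mathcal{N}(B)=\int_{\T^2}\prod_{i=1}^{s}f_B(\lambda_i\alpha,\lambda_i\beta)\,d\alpha\,d\beta.
\end{align*}
First I would dispose of the degenerate solutions — those in which two of the $n_i$ coincide — by a direct count yielding $O(M^{s-4})$ of them (implied constants here and below may depend on $s$ and the $\lambda_i$); this is negligible against the target quantity $\delta^sM^{s-3}$ whenever $\delta\geq M^{-\eta}$ for a small fixed $\eta>0$, which holds comfortably throughout the iteration below, since the density will never fall below a fixed negative power of $\log\log M$. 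It therefore suffices to establish, at each stage, that either $\mathcal{N}(A)\gg\delta^sM^{s-3}$ or one can pass to a long arithmetic progression on which $A$ has noticeably larger density.

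\emph{A one-step dichotomy.} Let $q_0=\delta^{-O(1)}$. I claim that a set $A\subseteq[M]$ of density $\delta$ satisfies at least one of: \emph{(a)} $A$ contains a non-trivial solution of~\eqref{eq:intro:quadsyst}; \emph{(b)} $A$ has density at least $\delta\bigl(1+c\,\delta^{O(1)}\bigr)$ on some arithmetic progression of common difference at most $q_0$ (necessarily of length $\gg\delta^{O(1)}M$); \emph{(c)} there exist $(\alpha_0,\beta_0)$ not within $q_0/M\times q_0/M^2$ of any rational $(a/q,b/q)$ with $q\leq q_0$, such that $|f_A(\alpha_0,\beta_0)|\geq\theta$, where $\theta\asymp\delta^{(s-3)/(s-6)}M$. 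Indeed, split $\T^2$ into the major arcs $\mathfrak{M}$ around rationals of denominator at most a suitable power of $\log M$ and the complementary minor arcs $\mathfrak{m}$; Weyl's inequality makes $f_{[M]}$ of size $o(\theta/\delta)$ on $\mathfrak{m}$, so that there $f_A$ differs from its balanced counterpart $f_A-\delta f_{[M]}$ only by a negligible amount. On $\mathfrak{m}$ one then bounds the contribution to $\mathcal{N}(A)$ by extracting one factor in $L^\infty$ and estimating the remaining $s-1\geq6$ factors by Hölder's inequality together with the restriction estimate of Bourgain~\cite{bourgain:restrlattice} for the lattice set $\{(n,n^2):n\leq M\}$ (a proof of which Keil also supplies) — applicable precisely because $s\geq7$ leaves a variable to spare past the critical sixth moment — obtaining a quantity of order $(\sup_{\mathfrak m}|f_A|)^{\,s-6}(\delta M)^3(\log M)^{O(1)}$, which is $o(\delta^sM^{s-3})$ unless \emph{(c)} holds at a minor-arc frequency. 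On the major arcs a Weyl-type expansion expresses the contribution of the arc at $a/q$ through the distribution of $A$ modulo $q$; the arcs with $q>q_0$ contribute negligibly by the same Hölder-and-restriction estimate once $f_{[M]}$ is seen to be small there, while for $q\leq q_0$, if \emph{(b)} fails the local densities are all within $O(\delta^{1+c})$ of $\delta$ and the resulting sum is at least $c\,\mathfrak{S}(\boldsymbol{\lambda})\,\mathfrak{I}(\boldsymbol{\lambda})\,\delta^sM^{s-3}$. Here the singular series $\mathfrak{S}$ and singular integral $\mathfrak{I}$ are strictly positive: the assumption that at least two of the $\lambda_i$ are positive and at least two are negative makes the quadratic form $\sum_i\lambda_it_i^2$ indefinite on the hyperplane $\sum_i\lambda_it_i=0$, so the associated variety carries nonsingular real and $p$-adic points with distinct, nonzero coordinates. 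If none of \emph{(a)}, \emph{(b)}, \emph{(c)} holds, then $\mathcal{N}(A)$ exceeds its degenerate part and \emph{(a)} holds after all.

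\emph{Linearization and iteration.} The step I expect to be hardest is converting alternative \emph{(c)} into a genuine density increment on an arithmetic progression. Since $(\alpha_0,\beta_0)$ is bounded away from the small-denominator rationals, $\beta_0$ has no good rational approximation with small denominator and the quadratic phase cannot be linearized globally; instead I would follow Gowers' linearization technique~\cite{gowers:sz}: Weyl-differencing $|f_A(\alpha_0,\beta_0)|^2$ shows that $2h\beta_0$ lies near a rational of controlled denominator for a positive proportion of small $h$; a greatest-common-divisor/covering argument then produces an arithmetic progression $P$ with $|P|\gg M^{1/2}$ on which $n\mapsto e(\alpha_0 n+\beta_0 n^2)$ agrees with a single linear phase $e(\gamma n)$ up to $o(1)$; and restricting to a subprogression on which $e(\gamma n)$ is nearly constant converts the bound $|f_A(\alpha_0,\beta_0)|\geq\theta$ — with the smallness of $f_{[M]}$ there — into a progression of length $\gg M^{1/2}$ on which $A$ has density at least $\delta+c\,\delta^{O(1)}$. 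In either increment case one rescales the progression to an interval $[M']$ with $M'\gg M^{1/2}$, where the dilation-invariance of~\eqref{eq:intro:quadsyst} guarantees that a non-trivial solution of the new instance pulls back to one of the original system, and then iterates. The density can at most double $O(\log(1/\delta))$ times, and a careful bookkeeping of the exponents arising from the $s=7$ case of the minor-arc Hölder step, the restriction estimate, the Weyl differencing and the linearization shows that the whole process terminates after at most $\delta^{-15}$ steps; since each step replaces $M$ by roughly $M^{1/2}$, a non-trivial solution must appear while $M^{2^{-\delta^{-15}}}$ still exceeds a constant depending on the $\lambda_i$ — that is, while $\delta^{-15}\ll\log\log M$, which is precisely the stated range $\delta\geq C(\log\log N)^{-1/15}$. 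Apart from the linearization, the remaining technical work lies in controlling the major-arc main term and singular series uniformly across the iteration, and in checking that the progressions produced at each stage can legitimately be rescaled back into the setting of~\eqref{eq:intro:quadsyst}.
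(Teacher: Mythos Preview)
This theorem is not proved in the present paper: it is quoted as a prior result of Keil~\cite{keil:diagonal} and serves only as background for the paper's own contribution, Theorem~\ref{thm:intro:mainthm}. There is therefore no proof here against which to compare your proposal.

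That said, your sketch is broadly consonant with the paper's description of Keil's method in the introduction: a density-increment iteration exploiting the translation/dilation invariance of~\eqref{eq:intro:quadsyst}, using only linear Fourier analysis (exponential sums with quadratic phase), the Smith--Keil circle-method analysis for the main term, Bourgain's restriction estimates for the lattice $\{(n,n^2)\}$, and Gowers' linearization of a single large quadratic phase to manufacture a density increment on a subprogression of length a power of $M$. The bookkeeping you outline --- a polynomial-in-$\delta^{-1}$ number of steps, each replacing $M$ by roughly $M^{1/2}$ --- is of the right shape to produce a doubly-logarithmic bound. Two small quibbles: the count of degenerate solutions should carry a factor $\log M$ (cf.\ Proposition~\ref{thm:multlin:trivsols} here), and for the borderline case $s=7$ the minor-arc H\"older step with six remaining factors uses the sixth-moment bound $\|V\|_6^6 \ll N^3\log N$ rather than the restriction estimate proper, which needs $p>6$; neither affects the structure of the argument.
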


The translation-invariance and sign conditions 
on the coefficients, which in particular force $s \geq 4$,
are necessary for the theorem to hold,
as explained by Keil~\cite{keil:diagonal}.
The constraint $s \geq 7$, on the other hand, is due to limitations
in the classical circle method.
The drawback of Theorem~\ref{thm:intro:keilthm} is that
the density bound is still doubly logarithmic,
in contrast with the linear case where logarithmic bounds are available.
Confirming an expectation of Smith~\cite[p.~276]{smith:diagonal},
we remove this discrepancy.

\begin{theoremintro}
\label{thm:intro:mainthm}
	Assume that $s \geq 7$ and 
	$\lambda_1,\dots,\lambda_s \in \Z \smallsetminus \{0\}$
	are as in Theorem~\ref{thm:intro:keilthm}.
	There exists a positive constant $c$
	depending on $s$ and $\lambda_1,\dotsc,\lambda_s$
	such that the following holds.
	Suppose that $A$ is a subset of $[N]$ of density
	\begin{align*}
		\delta \geq 2(\log N)^{-c}.
	\end{align*}
	Then $A$ contains a non-trivial solution
	to the system of equations~\eqref{eq:intro:quadsyst}.
\end{theoremintro}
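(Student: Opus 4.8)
The plan is to run a density-increment argument in the style of Smith and Keil, but organised --- as in the Heath-Brown--Szemer\'edi treatment of Roth's theorem --- so that a bounded density is reached after only $O_{s,\lambda}(\log\tfrac1\delta)$ steps; since the quadratic nature of~\eqref{eq:intro:quadsyst} forces a loss of roughly a square root in the length of the working interval at each step, it is only with so few steps that the interval survives down to densities of size $(\log N)^{-c}$. The starting point is the identity
\[
	\mathcal{N}(A) \;=\; \int_{\T^2} \prod_{i=1}^s F\bigl(\lambda_i\alpha,\lambda_i\beta\bigr)\,d\alpha\,d\beta,
	\qquad
	F(\alpha,\beta) \;=\; \sum_{n\in A} e\bigl(\alpha n+\beta n^2\bigr),
\]
where $\mathcal{N}(A)$ is a weighted count of the solutions of~\eqref{eq:intro:quadsyst} with all $n_i\in A$ (with $F(\lambda_i\,\cdot\,)$ read as $\cj{F}(|\lambda_i|\,\cdot\,)$ when $\lambda_i<0$). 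The degenerate solutions, those with a repeated variable, number $O_{s,\lambda}(\delta^{s-1}N^{s-4})$ and are thus smaller by a factor $\delta N$ than the main term sought; so it suffices to show $\mathcal{N}(A)\gg_{s,\lambda}\delta^s N^{s-3}$.

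To that end I would dissect $\T^2$ into major arcs $\mathfrak{M}$ --- centred at pairs $(a/q,b/q)$ of rationals of common denominator $q\leq Q$, with $Q$ a fixed power of $\log N$ --- and their complement $\mathfrak{m}$. On $\mathfrak{M}$ the integral contributes $\gg\delta^s\,\mathfrak{S}\,\mathfrak{J}\,N^{s-3}$, the singular series $\mathfrak{S}$ and singular integral $\mathfrak{J}$ being positive and bounded below precisely because of the sign hypotheses and the condition $s\geq 7$ --- the very point at which the classical treatment of~\eqref{eq:intro:quadsyst} in $[N]$ requires seven variables, through the Weyl bound for the quadratic exponential sum on the minor arcs. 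On $\mathfrak{m}$ I would invoke Bourgain's restriction inequality for the discrete parabola, in the sharpened (logarithm-free) form re-proved by Keil,
\[
	\| F \|_{L^6(\T^2)} \;\ll\; |A|^{1/2},
\]
and combine it via H\"older's inequality with the trivial pointwise bound $\sup_{\mathfrak m}|F|$ on the remaining $s-6$ factors. This produces the familiar dichotomy: either $\mathcal{N}(A)$ keeps its full main term, so that $A$ contains a non-trivial solution and we are done, or there is a point $(\theta,\phi)\in\mathfrak{m}$ at which $|F(\theta,\phi)|\geq\eta\,|A|$ for some $\eta=\delta^{O_{s,\lambda}(1)}$; that is, $A$ correlates with the quadratic phase $e(\theta n+\phi n^2)$.

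The decisive step --- and where I expect the real difficulty, and where the argument must improve on Keil's --- is to turn this correlation into a density increment that is \emph{efficient}, not merely \emph{nonzero}. In Keil's treatment one rational-approximates $\phi$, linearises $e(\phi n^2)$ on a progression of the corresponding modulus --- which, the linearisation being valid only over a length $\asymp\phi^{-1/2}$, already costs a factor $\asymp N^{1/2}$ --- and arrives at a gain of the form $\delta\mapsto\delta+c\,\delta^{O(1)}$; it is the $\delta^{-O(1)}$ steps this entails, each square-rooting the length, that yield the doubly-logarithmic bound of Theorem~\ref{thm:intro:keilthm}. To do better I would extract, from the failure of~\eqref{eq:intro:quadsyst} in $A$, not a single large value of $F$ but a whole fixed proportion of the relevant $L^2$-mass of $F$ over the minor arcs: peeling off large values one at a time and re-applying the H\"older--restriction estimate, one reaches either such a proportion, or a modulus $q\leq Q$ in whose residue classes $A$ is not equidistributed (itself a density increment, and with only polynomial loss in length). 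In the former case a simultaneous handling of the frequencies so produced, followed by a single use of Keil's linearisation, should furnish a progression $P$ with $|P|\gg\delta^{O_{s,\lambda}(1)}N^{1/2}$ on which $A$ has density at least $(1+c)\delta$, the essential improvement being that the factor $1+c$ is an absolute constant rather than $1+\delta^{O(1)}$.

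Finally, since~\eqref{eq:intro:quadsyst} is invariant under the affine rescaling carrying $P$ onto an initial segment $[N']$, the step iterates; after $O_{s,\lambda}(\log\tfrac1\delta)$ repetitions the density would exceed $1$. Each step divides the length by at most $\delta^{-O_{s,\lambda}(1)}N^{1/2}$, so after $t$ steps the length is still $\gg\delta^{O_{s,\lambda}(1)}N^{2^{-t}}$, which stays above the threshold needed to rerun the argument as long as $t\lesssim\log\log N$; comparing with the $O_{s,\lambda}(\log\tfrac1\delta)$ steps actually required forces a contradiction unless $\delta<2(\log N)^{-c}$ for a suitable $c=c(s,\lambda)>0$, which is the theorem. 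The analytic inputs --- the major-arc asymptotic and the restriction inequality --- are available in the form needed from Keil's paper; the genuinely new ingredient, and the crux of the proof, is the robust density increment of the preceding paragraph.
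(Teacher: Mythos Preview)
Your overall architecture is right --- energy increment \`a la Heath-Brown--Szemer\'edi, restriction estimates for the parabola, simultaneous linearisation of the quadratic phases --- and you correctly locate the crux in the ``robust density increment'' paragraph. But the specific form you claim for that increment, namely a density gain $\delta\mapsto(1+c)\delta$ with $c$ absolute together with a progression of length $\gg\delta^{-O(1)}N^{1/2}$, is not what actually comes out, and your iteration arithmetic depends on precisely this. When you ``simultaneously handle'' $R$ quadratic frequencies, the simultaneous quadratic recurrence of Schmidt (in the Green--Tao quantitative form) only yields a common linearising progression of length $N^{c/R^3}$, not $N^{1/2}$; and the energy-pigeonholing that produces those $R$ frequencies allows $R$ as large as $\delta^{-C}$. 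So the length loss and the density gain are \emph{coupled} through $R$: what one can prove is an increment of the form
\[
\delta\ \longmapsto\ (1+c_1 R^{c_2})\,\delta,
\qquad
N\ \longmapsto\ N^{c_3/R^3},
\]
for some $R\geq 1$ that is not under your control. The iteration then needs to track the quantity $\delta(\log N)^{\kappa}$ and check that it is non-decreasing for \emph{every} value of $R$, which forces $\kappa$ small but positive; your version with a fixed $N^{1/2}$ loss and $O(\log(1/\delta))$ steps short-circuits this and is not justified.

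Two smaller points. First, the endpoint estimate $\|F\|_{L^6(\T^2)}\ll|A|^{1/2}$ you quote is false by a logarithm; one works instead with an exponent strictly above $6$ (the paper uses $6.1$). Second, and more importantly, the restriction input you need is Bourgain's $\ell^p$--$L^2$ bound $\|S_f\|_p\ll\|f\|_2$, not the $\ell^p$--$L^\infty$ bound in Keil: after smoothing $f_A$ against the linearising progressions you must pass from a large restricted $\ell^{6.1}$ sum back to a large $L^2$ norm of the smoothed function, and only the $L^2$ form of the restriction estimate does this. Your proposal cites only the $L^\infty$ version, which is exactly the input that suffices for Keil's doubly-logarithmic bound but not for the logarithmic one.
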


Our main input is to adapt to the problem at hand
the energy-increment strategy
of Heath-Brown~\cite{heath-brown:roth}
and Szemerédi~\cite{szemeredi:roth},
by which one collects several large frequencies of the Fourier transform
to obtain a more efficient density increment.
In fact we use the framework from Green's exposition~\cite{green:roth} of this technique,
where the discrete Fourier transform is used to simplify combinatorial arguments.
Our proof also relies on the circle method analysis of Smith~\cite{smith:diagonal}
and Keil~\cite{keil:diagonal}, to which we make no further contribution,
and again on the estimates of Bourgain~\cite{bourgain:restrlattice}
for exponential sums of the form ${\sum_n a_n e(\alpha n + \beta n^2)}$.
In our situation however, we now fully exploit Bourgain's $L^p \,\text{--}\, L^2$ estimate,
and we could not content ourselves with an $L^p \,\text{--}\, L^\infty$ bound.
A new ingredient of our approach is the use of simultaneous linearization
of quadratics, a technique developped by Green and Tao~\cite{GT:quantsz}
in the context of deriving efficient bounds for Szemerédi's theorem for
progressions of length four.
Our setting is different than in the higher-order situation and
we work only with arithmetic progressions, but we have to be careful
with how this process interacts with the arithmetic energy increment strategy.

The dependency of the exponent $c$ on the coefficients
$\lambda_i$ in Theorem~\ref{thm:intro:mainthm}
is an unescapable feature of our argument, and even in the
case where $s = 7$ and $\lambda_i = \pm 1$ (say)
we do not expect it to produce numerically very efficient constants.
This seems to be an intrinsic limitation in the original
method of Heath-Brown~\cite{heath-brown:roth}
and Szemerédi~\cite{szemeredi:roth}, and obtaining competitive exponents
would likely require an adaptation of the machinery
of density increment on Bohr sets
developped by Bourgain~\cite{bourgain:roth}.
We note finally that 
Theorem~\ref{thm:intro:mainthm} would 
follow as a corollary if logarithmic bounds
for Szemerédi's theorem for progressions of 
length $s$ were to be established,
and in fact there is ongoing work in that direction
by Green and Tao for $s = 4$, but the cases $s \geq 5$
seem far from accessible at present.

In the converse direction, an argument
of Shapira~\cite{shapira:behrend}
shows that for most choices of $(\lambda_i)$, there
exist Behrend-type sets of density $e^{-c\sqrt{\log N}}$
containing no solutions to the first equation in~\eqref{eq:intro:quadsyst},
although this does not answer the question of solving
the squares equation alone.
In the special case where $s = 3$ and $(\lambda_i) = (1,1,-2)$,
which is currently out of reach of analytic methods,
a construction of Gyarmati and Ruzsa~\cite{GR:squaresnoAP} gives a set of density
$(\log\log N)^{-1/2}$ without solutions to the second equation
in~\eqref{eq:intro:quadsyst}.
For small values of $s$, we do not have a good heuristic
for what the true density bound in Theorem~\ref{thm:intro:mainthm} should be,
but for large values it is tempting to conjecture Behrend-type bounds,
in light of recent developments in Roth's theorem
in many variables~\cite{schoenshkredov:rothmany,schoensisask:rothmany}.

\textbf{Acknowledgements.} We thank Farzad Aryan for
interesting discussions on the problem studied in this paper.

\textbf{Funding.} This work was partially supported
by the ANR Caesar {ANR-12-BS01-0011}.

\section{Notation}
\label{sec:notation}

In this preliminary section we introduce our basic notation
and our normalization conventions.
We write $\T = \R/\Z$ for the torus, equipped with
the distance $\| \cdot \| = d( \,\cdot , \Z)$.
We write $X \sim Y$ when $X,Y$ are positive reals such that
$Y \leq X \leq 2Y$.
We also let $[N]$ denote the interval $\{1,\dotsc,N\}$
when $N$ is an integer.
We let $c$ and $C$ denote positive constants whose
values may change from line to line, and when
we want to temporarily fix those values we
use subscripts $c_1,c_2$, and so on.

Given integers $N_1,\dots,N_d \geq 1$ and a function 
$F : \Z_{N_1} \times \dotsb \times \Z_{N_d} \rightarrow \C$,
we define the $L^p$ norm of $F$ for $p \geq 1$ by
\begin{align*}
	\| F \|_p = \textstyle 
	\big( \E_{n_1 \in \Z_{N_1}, \dots, n_d \in \Z_{N_d}} |F(n_1,\dots,n_d)|^p \big)^{1/p},
\end{align*}
For a function $f : \Z \rightarrow \C$ and an integer $N \geq 1$, we define
\begin{align*}
	\| f \|_{L^p(N)} = \big( \E_{n \in [N]}\, |f(n)|^p )^{1/p}. 
\end{align*}
We also use the standard $\ell^p$ and $L^p$ norms respectively on $\Z^d$ and on $\T^d$,
and we write $\| f \|_p = \| f \|_{\ell^p(\Z^d)}$ for functions on $\Z^d$
and $\| f \|_p = \| f \|_{L^p(\T^d)}$ for functions on $\T^d$.
We occasionally use the Fourier transform of a function 
$g : \Z^d \rightarrow \C$ with finite support, which is then defined by
$\wh{g}(\bfgamma) 
= \sum_{\bfn \in \Z^d} g(\bfn) e( \bfn \cdot \bfgamma )$
for $\bfgamma \in \T^d$.

\section{Outline and organization of the paper}
\label{sec:outline}

Our argument is modeled after the original
energy increment strategy of Heath-Brown~\cite{heath-brown:roth}
and Szemerédi~\cite{szemeredi:roth},
which we briefly recall, following the exposition of Green~\cite{green:roth}.
In the case of three-term arithmetic progressions, the counting operator 
acting on functions $f_i : \Z \rightarrow \C$ with support in $[N]$ takes the form
\begin{align*}
	M^{-2} \sum_{\substack{n_1,n_2,n_3 \in \Z : \\ n_1 + n_2 = 2n_3}}
	f_1 (n_1) f_2 (n_2) f_3 (n_3)
	= \sum_{r \in \Z_M} \wh{f}_1 (r) \wh{f}_2 (r) \wh{f}_3 (-2r),
\end{align*}
where $M = 2N$ and we have defined the Fourier transform $\wh{f} : \Z_M \rightarrow \C$ by
\begin{align}
\label{eq:outline:linearfouriertsf}
	\wh{f}(r) = \E_{n \in [M]} f(n) e\Big( \frac{nr}{M} \Big).
\end{align}
When $A$ is a subset of $[N]$ of density $\delta$ 
containing no (non-trivial) three-term arithmetic progressions,
the usual multilinear expansion process coupled with Hölder's inequality
shows that $\| \wh{f}_A \|_3 \gg \delta$,
where $f_A = 1_A - \delta 1_{[N]}$.
Via a clever ``energy pigeonholing'' argument, one can then extract a larger
restricted moment of lower order ($2$ in this case),
that is to say one may find an integer $1 \leq R \ll \delta^{-C}$ 
and distinct frequencies $r_1,\dots,r_R \in \Z_M$ such that
\begin{align}
\label{eq:outline:largerestrictedl2}
	R^c \ll \sum_{i=1}^R \bigg| \frac{\wh{f}_A}{\delta}(r_i) \bigg|^2.
\end{align}
The next step is to observe that the phases $x \mapsto e(r_i x/M)$
are simultaneously approximately constant on a progression $P$ of length
$\sim N^{c/R}$ by Dirichlet's theorem on
simultaneous linear recurrence~\cite{schmidt:diophapprox}.
This can be used in the expansion~\eqref{eq:outline:linearfouriertsf}
to show that $\wh{f}_A(r_i) = \wh{f_A \ast \mu_P}(r_i) \,+\, O(N^{-c/R})$
for all $i \in [R]$, an error term which is strong enough
to replace $f_A$ by its convolution with $\mu_P$
in~\eqref{eq:outline:largerestrictedl2} at little to no cost.
Completing the sum in~\eqref{eq:outline:largerestrictedl2}
and using Plancherel's identity, it follows that
$\| f_A \ast \mu_P \|_{L^2(N)}^2 \gg \delta R^c$.
By unfolding $f_A = 1_A - \delta 1_{[N]}$,
performing a couple regularity computations and
using an $L^\infty \,\text{--}\, L^1$ 
bound\footnote{
The argument in Green's note~\cite{green:roth}
is in fact even simpler in that it involves the
function $1_A$ instead of $f_A$, and thus does not
require the said regularity computations.
However it requires an additional Fejer-smoothing operation
in the counting step, which we could not reproduce here:
see the remarks at the end of the article.
},
one is quickly led to a density increment of the form
\begin{align*}
	\delta &\leftarrow (1 + cR^c) \cdot \delta, \\
	N &\leftarrow N^{c/R},
\end{align*}
upon passing to the smaller arithmetic progression and rescaling.
Iterating these bounds, one obtains a logarithmic density bound
in Roth's theorem, and the reason behind this success is the 
efficient lower bound in~\eqref{eq:outline:largerestrictedl2}, 
which crucially does not lose any power of~$\delta$ and even
adds a gain factor $R$.

In the quadratic case then, the counting operator
associated to the system~\eqref{eq:intro:quadsyst} is
\begin{align*}
	M^{-(s-3)} \quad \sum_{ \mathclap{ n_1,\dots,n_s \in \Z_M \,:\, \eqref{eq:intro:quadsyst} } } \quad
		f_1(n_1) \dotsc f_s(n_s)
	=  \quad \sum_{ \mathclap{\mathbf{z} \in \Z_M \times \Z_{M^2}} } \quad
	S_{f_1}(\lambda_s \mathbf{z}) \dotsc S_{f_s}(\lambda_s \mathbf{z})
\end{align*}
where $s \geq 7$ and $S_f : \Z_M \times \Z_{M^2} \rightarrow \C$
is defined at $\bfz = (x,y)$ by
\begin{align}
\label{eq:outline:quadfouriertsf}
	S_f ( \bfz ) = \E_{n \in [M]} f(n) e\Big( \frac{xn}{M} + \frac{yn^2}{M^2} \Big).
\end{align}
This last expression is best interpreted as a quadratic version
of the usual Fourier transform.
By the discrete version of a restriction estimate
of Bourgain~\cite{bourgain:restrlattice},
we know that $S_f$ has bounded moments of order $> 6$
for bounded $f$, while the $6$-th moment for $f = 1_{[N]}$ is already
unbounded due to arithmetic considerations~\cite{rogovskaya:6thmoment}.
This suggests the following modification to the previous strategy:
Starting with a set with fewer solutions to~\eqref{eq:intro:quadsyst}
than expected from the circle method,
we now extract a large restricted sum
\begin{align}
\label{eq:outline:largerestrictedlr}
	R^c \ll \sum_{i=1}^R |S_{f_A / \delta}(\mathbf{z}_i)|^r,
\end{align}
with $6 < r < s$;
this is done in Sections~\ref{sec:multlin} and~\ref{sec:energy}.
This time we simultaneously linearize the corresponding
quadratic phases $n \mapsto e( \alpha_i n + \beta_i n^2 )$,
taking our inspiration from Green and Tao~\cite{GT:quantsz} and
relying in particular on their version of 
Schmidt's result on simultaneous quadratic recurrence~\cite{schmidt:fractionalparts}.
This lets us replace $f_A$ in~\eqref{eq:outline:largerestrictedlr}
by a smoothed version $\wt{f}_A$, which corresponds roughly
to an average of convolutions of $f_A$ with
smaller progressions of size $\sim N^{c/R^3}$.
Completing the sum in~\eqref{eq:outline:largerestrictedlr},
and applying Bourgain's $\ell^r \,\text{--}\, L^2$ restriction
bound~\cite{bourgain:restrlattice} in place of Plancherel's identity,
we deduce that
\begin{align*}
	R^c \ll \| S_{\wt{f}_A / \delta} \|_r \ll \| \wt{f}_A / \delta \|_{L^2(M)},
\end{align*}
and from there the rest of the argument
proceeds much as in the linear case.
The linearization and the $L^2$ density increment steps
are the most technical parts of this work and
are found in Section~\ref{sec:lin}.
The main density increment lemma is proved in
Section~\ref{sec:pieces}, but the final 
density increment iteration is
carried out prior to all of these steps,
in Section~\ref{sec:dincr}.
Section~\ref{sec:rks} contains
technical remarks on possible simplifications
or extensions of the argument.

\section{The density increment iteration}
\label{sec:dincr}

In this section we deduce
Theorem~\ref{thm:intro:mainthm} 
from the following density increment statement, 
whose proof will occupy the rest of this article.
Although formally this is the last logical step of our argument,
it is convenient to dispense with it at the beginning of the article,
so that we may fix a scale $N$ at the outset.
The iteration process below is essentially that carried out in
Green's note~\cite{green:roth}.

\begin{proposition}
\label{thm:dincr:itlemma}
    Suppose that $s \geq 7$ and $\lambda_1,\dots,\lambda_s \in \Z \smallsetminus \{0\}$
    are as in Theorem~\ref{thm:intro:keilthm}.
    There exist positive constants $c_1,c_2,c_3,D$ depending on
    $s$ and $\lambda_1,\dotsc,\lambda_s$ such that the following holds.
    Suppose that $A$ is a subset of $[N]$ of density $\delta$
    containing no non-trivial solutions
    to~\eqref{eq:intro:quadsyst}, and that $N \geq e^{(2/\delta)^D}$.
    Then there exists an arithmetic progression $Q$ contained in $[N]$
    and an integer $R \geq 1$ such that
    \begin{align*}
        |A \cap Q| / |Q| &\geq (1 + c_1 R^{c_2}) \cdot \delta, \\
        |Q| &\geq N^{c_3/R^3}.
    \end{align*}
\end{proposition}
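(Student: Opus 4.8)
The plan is to run the energy-increment argument of Heath-Brown~\cite{heath-brown:roth} and Szemer\'edi~\cite{szemeredi:roth}, in the formulation of Green~\cite{green:roth}, with the quadratic Fourier transform $S_f$ of~\eqref{eq:outline:quadfouriertsf} in place of the ordinary one, following the four stages outlined in Section~\ref{sec:outline}.

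\textbf{Step 1: counting and extraction of a restricted moment.} First I would expand the counting operator for~\eqref{eq:intro:quadsyst} with every input equal to $1_A$, writing $1_A = \delta 1_{[N]} + f_A$, which turns the right-hand side of the identity in Section~\ref{sec:outline} into a sum of $2^s$ terms. The term in which every factor is $\delta 1_{[N]}$ equals $\delta^s$ times the Hardy--Littlewood main term, which by the circle-method analysis of Smith~\cite{smith:diagonal} and Keil~\cite{keil:diagonal} is bounded below by a positive constant depending only on the $\lambda_i$ --- this is where the translation-invariance and sign hypotheses on the $\lambda_i$, and the bound $s \geq 7$, enter, ensuring a positive singular integral and singular series. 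On the other hand, since $A$ contains only trivial solutions, whose number is negligible compared to $\delta^s N^{s-3}$ once $N \geq e^{(2/\delta)^D}$, the full normalized sum is $o(\delta^s)$. Hence at least one of the remaining terms --- each carrying a factor $S_{f_A}$ --- has absolute value $\gg \delta^s$; the other $s-1$ factors there are $S_{g_i}$ with $g_i \in \{1_A,\,\delta 1_{[N]}\}$, so $\|g_i\|_\infty \leq 1$. Applying H\"older's inequality and Bourgain's $\ell^r\,\text{--}\,L^2$ restriction estimate~\cite{bourgain:restrlattice} to those $s-1$ factors --- legitimate precisely because $s \geq 7$ permits a choice of exponents all exceeding $6$ --- yields $\|S_{f_A/\delta}\|_r \gg 1$ for some fixed $r$ with $6 < r < s$. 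The Heath-Brown--Szemer\'edi energy-pigeonholing procedure, in the discrete formulation of Green~\cite{green:roth}, then upgrades this to the restricted sum~\eqref{eq:outline:largerestrictedlr}: there are an integer $R$ with $1 \leq R \ll \delta^{-C}$ and distinct frequencies $\bfz_1,\dots,\bfz_R \in \Z_M \times \Z_{M^2}$ such that $R^c \ll \sum_{i=1}^R |S_{f_A/\delta}(\bfz_i)|^r$. This is the content of Sections~\ref{sec:multlin} and~\ref{sec:energy}.

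\textbf{Step 2: simultaneous linearization.} Writing $\bfz_i = (x_i, y_i)$, the $i$-th phase is $n \mapsto e(\alpha_i n + \beta_i n^2)$ with $\alpha_i = x_i/M$ and $\beta_i = y_i/M^2$. Restricting such a phase to an arithmetic progression of common difference $q$ leaves a quadratic in the new variable whose leading coefficient is $q^2 \beta_i$, so to flatten it one needs $\|q^2 \beta_i\|$ small \emph{simultaneously} over all $i$; I would obtain such a $q$, together with the resulting progression structure, from the Green--Tao form~\cite{GT:quantsz} of Schmidt's theorem on simultaneous quadratic recurrence~\cite{schmidt:fractionalparts}. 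Because simultaneous quadratic recurrence is quantitatively weaker than its linear counterpart, the progressions produced have length only $\sim N^{c/R^3}$, which is the source of the exponent $R^3$ in the statement. Once the quadratic parts have been flattened, the residual linear phases are handled as in the linear case, and the net effect is that $f_A$ may be replaced in~\eqref{eq:outline:largerestrictedlr} by a smoothed function $\wt{f}_A$ --- roughly an average of convolutions $f_A \ast \mu_{P_j}$ over progressions $P_j$ of length $\sim N^{c/R^3}$ --- with $|S_{\wt{f}_A}(\bfz_i) - S_{f_A}(\bfz_i)| \ll N^{-c/R^3}$ uniformly in $i$. Since $R \ll \delta^{-C}$ and $N \geq e^{(2/\delta)^D}$ with $D$ chosen large, this error is $o(\delta)$ and is absorbed, leaving $R^c \ll \sum_{i=1}^R |S_{\wt{f}_A/\delta}(\bfz_i)|^r$. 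This is the heart of Section~\ref{sec:lin}.

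\textbf{Step 3: completion and the $L^2$ density increment.} Because the $\bfz_i$ lie among all the frequencies, $\sum_{i=1}^R |S_{\wt{f}_A/\delta}(\bfz_i)|^r \leq \|S_{\wt{f}_A/\delta}\|_r^r$, and Bourgain's $\ell^r\,\text{--}\,L^2$ estimate, used now in place of Plancherel's identity, bounds the right-hand side by $\|\wt{f}_A/\delta\|_{L^2(M)}^r$ up to a constant; after relabelling $c$ this gives $\|\wt{f}_A\|_{L^2(M)}^2 \gg \delta^2 R^c$. Unfolding $f_A = 1_A - \delta 1_{[N]}$ exhibits $\wt{f}_A$ as an average of $1_A \ast \mu_{P_j} - \delta\, 1_{[N]} \ast \mu_{P_j}$; after the regularity computations and the $L^\infty\,\text{--}\,L^1$ bound recalled in the footnote are used to control the boundary effects of these convolutions, and using that $\wt{f}_A$ has sup-norm at most $1$ and essentially vanishing mean, the standard $L^2$-versus-$L^\infty$ argument of Roth's theorem produces an arithmetic progression $Q \subseteq [N]$ with $|Q| \geq N^{c_3/R^3}$ on which, after pigeonholing over $j$ and over the base point, $A$ has density at least $(1 + c_1 R^{c_2})\delta$. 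The hypothesis $N \geq e^{(2/\delta)^D}$ is invoked one last time to guarantee that $Q$ is non-empty and that every accumulated error of size $N^{-c/R^3}$ is negligible against $\delta R^c$. This packaging is carried out in Section~\ref{sec:pieces}.

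\textbf{Main obstacle.} I expect the linearization stage, and its interaction with the energy increment, to be the hard part: one must flatten all $R$ quadratic phases over a \emph{single} common progression, propagate the resulting error terms uniformly over the $R$ frequencies without destroying the gain factor $R^c$, and verify that the smoothed object $\wt{f}_A$ still behaves like a genuine density fluctuation --- mean zero and bounded by $1$ --- so that the concluding $L^2$ argument goes through. Choosing the H\"older exponents in Step~1 compatibly with those dictated by Schmidt's theorem, while keeping the final progression of length $N^{c/R^3}$, is the technical crux.
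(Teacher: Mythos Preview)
Your proposal is correct and follows essentially the paper's approach (Corollary~\ref{thm:energy:restrenergy} for Step~1, Propositions~\ref{thm:lin:simultlin}--\ref{thm:lin:smoothing} for Step~2, Propositions~\ref{thm:lin:largeL2}--\ref{thm:lin:L2dincr} for Step~3, assembled in Section~\ref{sec:pieces}). Two minor imprecisions worth noting: the large moment extracted in Step~1 is the $s$-moment $\|S_{f_A/\delta}\|_s \gg 1$ (Proposition~\ref{thm:multlin:largeenergy}), not an $r$-moment with $r<s$, which is what the pigeonholing Lemma~\ref{thm:energy:Lspigeon} needs as input; and the smoothing in Step~2 cannot be over a single common progression, since after flattening the quadratic parts via a common $q$ the residual linear phases $q(\alpha_i + 2n\beta_i)$ depend on the base point $n$, so the paper uses a two-tier family $P,(Q_n)_n$ and the somewhat unusual smoothing~\eqref{eq:lin:smoothing}.
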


\textit{Proof of Theorem~\ref{thm:intro:mainthm}.}
We consider a subset $A$ of $[N]$ of density $\delta$,
and we construct iteratively a sequence of subsets
$A_i$ of intervals $[N_i]$ of density $\delta_i$, 
initializing at $(A_0,N_0) = (A,N)$.
As long as $\delta_i (\log N_i)^{1/D} \geq 2$,
we run the following algorithm:
If $A_i$ contains no non-trivial
solutions to~\eqref{eq:intro:quadsyst}, 
we let $Q_{i+1} = u_{i+1} + q_{i+1} [N_{i+1}]$ be
the arithmetic progression given by Proposition~\ref{thm:dincr:itlemma},
and we define $A_{i+1}$ by 
$A_i \cap Q_{i+1} = u_{i+1} + q_{i+1} A_{i+1}$.
Thus there exists $R_i \geq 1$ such that
\begin{align}
\label{eq:dincr:itbounds}
\begin{split}
	\delta_{i+1} &\geq (1 + c_1 R_i^{c_2}) \cdot \delta_i, \\
	N_{i+1} &\geq N_i^{c_3/R_{i}^3}.
\end{split}
\end{align}
In particular, $\delta_{i+1} \geq (1 + c_1) \cdot \delta_i$
and the algorithm carries on for a finite number of steps,
since all densities are bounded by $1$.
We now show that there exists a positive constant
$\kappa = \kappa(c_1,c_2,c_3,D) \leq 1/D$ such that
if $\delta (\log N)^\kappa \geq 2$,
then $\delta_i (\log N_i)^\kappa \geq 2$ for every $i$
for which $A_i$ is defined.
Indeed by~\eqref{eq:dincr:itbounds}, it follows that
\begin{align*}
	\frac{\delta_{i+1}(\log N_{i+1})^{\kappa}}{\delta_i (\log N_i)^{\kappa}} 
	\geq \frac{1 + c_1 R_i^{c_2}}{(R_i^3/c_3)^\kappa} \eqqcolon f(R_i).
\end{align*}
Assuming that $\kappa \leq c_2 / 6$, we clearly have $f(R) \geq 1$ 
for $R \geq C(c_1,c_2,c_3)$ large enough,
and for $R \leq C(c_1,c_2,c_3)$ we can also guarantee that $f(R) \geq 1$
by choosing $\kappa \leq c(c_1,c_2,c_3)$ small enough.

Assume now that $\delta \geq 2(\log N)^{-\kappa}$,
so that the condition $\delta_i (\log N_i)^{1/D} \geq 2$ is always met.
Since the iteration cannot go on indefinitely,
some set $A_i$ necessarily contains a
non-trivial solution to~\eqref{eq:intro:quadsyst},
and so does $A$ by translation and dilation invariance.
\qed

\section{Analytic preparation}
\label{sec:setup}

Our goal is now to
prove Proposition~\ref{thm:dincr:itlemma}.
For the rest of the article, we thus
fix an integer $s \geq 7$ and 
coefficients $\lambda_1,\dotsc,\lambda_s \in \Z \smallsetminus \{0\}$ 
such that $\lambda_1 + \dotsb + \lambda_s = 0$, and such that at
least two of the $\lambda_i$ are positive and at least two are negative.
We also fix an integer $N \geq 1$, but at this point we do not
impose size conditions on it, and we introduce
those as our argument progresses.
The same applies to the subset $A$ of $[N]$ of density $\delta$
which will be introduced later on,
although we fix the notation $f_A = 1_A - \delta 1_{[N]}$ here.

More importantly, from this point onwards, 
we let all further implicit or explicit constants
depend on $s$ and on $\lambda_1,\dotsc,\lambda_s$. 
While it would be possible in theory to track down 
all of these dependencies, it would require a sizeable effort
on our part, which we do not think worthwile in light of the asymptotic
nature of our results.
Furthermore, such a process would almost certainly not
allow us to eliminate the dependency of the logarithmic exponent 
in Theorem~\ref{thm:intro:mainthm}
on $s$ and on $\lambda_1,\dotsc,\lambda_s$,
given the shape of the density-increment statement
in the previous section.

Throughout the article we embed the interval $[N]$
in a larger interval $[M]$ for Fourier analytic purposes,
where $M$ is prime number
of magnitude $M \sim 2 ( |\lambda_1| + \dotsb + |\lambda_s| ) \cdot N$
chosen via Bertrand's postulate.
Therefore, we have $(M,\lambda_1 \dots \lambda_s) = 1$
and for integers $n_1,\dots,n_s \in [N]$, the system 
of equations~\eqref{eq:intro:quadsyst} is equivalent to
\begin{align*}
    \lambda_1 n_1 + \dotsb + \lambda_s n_s &\equiv 0 \pmod M, \\
    \lambda_1 n_1^2 + \dotsb + \lambda_s n_s^2 &\equiv 0 \pmod {M^2}.
\end{align*}
We use both the discrete and continuous (quadratic) Fourier transforms
in this article: the discrete transform is more convenient
for combinatorics, while the continuous one is more suitable
for number theory.
Precisely, given a function $f : \Z \rightarrow \C$ we define
$S_f : \Z_M \times \Z_{M^2} \rightarrow \C$
and $V_f : \T^2 \rightarrow \C$ by
\begin{align}
	\label{eq:setup:Ssum}
   	&&
    S_f(x,y)
    &= \E_{n \in [M]} f(n)
    e\Big( \frac{xn}{M} + \frac{yn^2}{M^2}\Big)
   	&&((x,y) \in \Z_M \times \Z_{M^2}),
    \\
    \label{eq:setup:Vsum}
    &&
    V_f(\alpha,\beta)
    &= \sum_{n \in \Z} f(n) e(\alpha n + \beta n^2).
    &&
    (\alpha,\beta \in \T).
\end{align}
In practice we usually work with functions $f : [N] \rightarrow \C$,
which we view throughout the article 
as functions on $\Z$ or $[M]$ with support in $[N]$.
We also define the normalized counting operator
acting on functions $f_1,\dots,f_s : \Z \rightarrow \C$ by
\begin{align}
\label{eq:setup:countingop}
    T(f_1,\dots,f_s) =
    M^{-(s-3)} \sum_{n_1,\dots,n_s \in \Z \,:\, \eqref{eq:intro:quadsyst}}
    f_1(n_1) \cdots f_s(n_s).
\end{align}
From the previous congruence considerations
and by discrete Fourier inversion, we 
deduce the following harmonic expression
for the counting operator.
\begin{proposition}
\label{thm:setup:countingopfourier}
    For functions $f_1,\dotsc,f_s : [N] \rightarrow \C$, we have
    \begin{align*}
       T(f_1,\dots,f_s) = \sum_{\mathbf{z} \in \Z_M \times \Z_{M^2}}
       S_{f_1}(\lambda_1 \bfz) \cdots S_{f_s}(\lambda_s \bfz)
    \end{align*}
\end{proposition}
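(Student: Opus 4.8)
\emph{Proof of Proposition~\ref{thm:setup:countingopfourier} (plan).}
The plan is a routine application of discrete Fourier inversion on $\Z_M \times \Z_{M^2}$, followed by recognition of the resulting inner sums as the transforms $S_{f_i}$. Since $f_1,\dotsc,f_s$ are supported in $[N]$, only tuples $(n_1,\dotsc,n_s) \in [N]^s$ contribute to the sum defining $T(f_1,\dotsc,f_s)$ in~\eqref{eq:setup:countingop}, and for such tuples the congruence considerations recalled just above show that~\eqref{eq:intro:quadsyst} holds if and only if $\lambda_1 n_1 + \dotsb + \lambda_s n_s \equiv 0 \pmod M$ and $\lambda_1 n_1^2 + \dotsb + \lambda_s n_s^2 \equiv 0 \pmod{M^2}$. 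I would therefore insert the detector identities
\[
    \frac{1}{M}\sum_{x \in \Z_M} e\Big(\frac{xk}{M}\Big) = \mathbf{1}_{M \mid k},
    \qquad
    \frac{1}{M^2}\sum_{y \in \Z_{M^2}} e\Big(\frac{y\ell}{M^2}\Big) = \mathbf{1}_{M^2 \mid \ell},
\]
applied with $k = \lambda_1 n_1 + \dotsb + \lambda_s n_s$ and $\ell = \lambda_1 n_1^2 + \dotsb + \lambda_s n_s^2$. This contributes a factor $M^{-3}$, which combines with the normalization $M^{-(s-3)}$ into an overall factor $M^{-s}$.

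Next I would interchange the (finite) summations over $n_1,\dotsc,n_s$ and over $(x,y) \in \Z_M \times \Z_{M^2}$, using that the phase factors as $e\big(x k / M + y \ell / M^2\big) = \prod_{i=1}^s e\big(\lambda_i x n_i / M + \lambda_i y n_i^2 / M^2\big)$. The sum over each $n_i$ then decouples, and since $f_i$ is supported in $[N] \subseteq [M]$, the definition~\eqref{eq:setup:Ssum} gives
\[
    \sum_{n_i \in \Z} f_i(n_i)\, e\Big(\frac{\lambda_i x\, n_i}{M} + \frac{\lambda_i y\, n_i^2}{M^2}\Big)
    = M\, S_{f_i}(\lambda_i x, \lambda_i y) = M\, S_{f_i}(\lambda_i \bfz),
\]
writing $\bfz = (x,y)$ and reading $\lambda_i x$ modulo $M$ and $\lambda_i y$ modulo $M^2$ (the phases depend only on these residues). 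Collecting the $s$ factors of $M$ cancels the remaining $M^{-s}$ and yields the claimed identity.

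I do not anticipate any genuine obstacle here. The one point meriting care is the passage from integer solutions of~\eqref{eq:intro:quadsyst} to solutions of the congruence system, which is exactly where the choice of $M$ prime with $M \sim 2(|\lambda_1| + \dotsb + |\lambda_s|) N$ enters: it guarantees $|\lambda_1 n_1 + \dotsb + \lambda_s n_s| < M$ and $|\lambda_1 n_1^2 + \dotsb + \lambda_s n_s^2| < M^2$ for all $n_1,\dotsc,n_s \in [N]$, so that the two congruences genuinely force the two equations, and this equivalence has already been recorded in the text preceding the statement. \qed
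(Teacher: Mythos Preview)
Your proposal is correct and follows exactly the approach indicated in the paper, which simply records that the identity follows ``from the previous congruence considerations and by discrete Fourier inversion'' without spelling out the details. Your write-up is a faithful expansion of that one-line justification.
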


\section{Discrete restriction estimates}
\label{sec:restr}

In this short section, we translate to the
discrete setting a restriction estimate
of Bourgain~\cite{bourgain:restrlattice}, 
which will prove extremely useful in the sequel.
We start with (the $d$-dimensional version of) a very useful lemma 
due to Marcinkiewicz and Zygmund,
and instrumental in Green's proof of 
Roth's theorem in the primes~\cite{green:rothprimes}.

\begin{proposition}
\label{thm:restr:discretize}
    Suppose that $M_1,\dots,M_d \geq 1$ are integers and let $p \geq 1$.
    We have, for every function
    $f : \Z^d \rightarrow \C$ with support in $[M_1] \times \dotsb \times [M_d]$,
    \begin{align*}
        \sum_{r_1 \in \Z_{M_1}} \cdots \sum_{r_d \in \Z_{M_d}}
        \Big| \wh{f}\Big( \frac{r_1}{M_1},\dotsc,\frac{r_d}{M_d} \Big)
        \Big|^p
        \ll M_1 \cdots M_d
        \idotsint\limits_{\T^d} | \wh{f}(\theta_1,\dotsc,\theta_d) |^p
        \mathrm{d}\theta_1 \dots \mathrm{d}\theta_d,
\end{align*}
where the implicit constant depends on $d$ and $p$ only.
\end{proposition}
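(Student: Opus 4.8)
This is the Marcinkiewicz--Zygmund inequality comparing the discrete $\ell^p$-norm of the Fourier transform sampled on the lattice $\tfrac{r_1}{M_1}\times\dots\times\tfrac{r_d}{M_d}$ with the continuous $L^p(\T^d)$-norm. I would first reduce to the one-dimensional case $d=1$ by iterating: if the inequality holds in dimension one (with constant depending on $p$), then slicing the cube and applying Fubini together with the one-dimensional estimate in each coordinate successively yields the $d$-dimensional bound, with constant $C(p)^d$. So the heart of the matter is the case $d=1$: for $f:\Z\to\C$ supported in $[M]$,
\begin{align*}
	\sum_{r\in\Z_M}\Big|\wh{f}\Big(\frac{r}{M}\Big)\Big|^p \ll M\int_{\T}|\wh{f}(\theta)|^p\,\mathrm{d}\theta.
\end{align*}

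For the one-dimensional estimate, the classical argument runs as follows. The function $\theta\mapsto\wh f(\theta)=\sum_{n\in[M]}f(n)e(n\theta)$ is a trigonometric polynomial of degree at most $M$ (more precisely, its frequencies lie in an interval of length $M$). I would like to say that $|\wh f(r/M)|^p$ is comparable to the average of $|\wh f|^p$ over the arc $[r/M,(r+1)/M]$; summing over $r$ would then give the result. The mechanism that makes this work is a Bernstein-type / Nikolskii-type inequality: for a trigonometric polynomial $g$ of degree $\le M$ one has, for any subinterval $I$ of $\T$ of length $1/M$ and any point $\theta_0\in I$,
\begin{align*}
	|g(\theta_0)|^p \ll M\int_I |g(\theta)|^p\,\mathrm{d}\theta,
\end{align*}
with implicit constant depending only on $p$. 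I would prove this by first establishing it for $p=1$ — here one uses that $g'$ is again a polynomial of degree $O(M)$ controlled via Bernstein's inequality $\|g'\|_\infty\le 2\pi M\|g\|_\infty$ in $L^\infty$, combined with a standard interpolation/duality trick (or directly: write $g(\theta_0)=\frac{1}{|I|}\int_I g + \frac{1}{|I|}\int_I(g(\theta_0)-g(\theta))\,\mathrm{d}\theta$ and estimate the second term by $|I|\cdot\|g'\|_{L^\infty(I)}$, then bootstrap $\|g'\|_{L^\infty(I)}$ against $\|g\|_{L^1}$ using that $g'$ has degree $O(M)$). Once the $p=1$ case of the pointwise-from-local-average bound is known, the general $p\ge 1$ case follows by applying it to the polynomial $g=\wh f$ and using that $|\wh f|^p$, while not a polynomial, is dominated pointwise on $I$ by an expression one can control — alternatively, and more cleanly, one runs the entire $p$-dependent argument directly: expand $|g(\theta_0)|^p - |g(\theta)|^p$ via the mean value theorem applied to $t\mapsto|t|^p$, bound it by $p\,(\max_I|g|)^{p-1}|g(\theta_0)-g(\theta)|$, and close the loop using the $L^\infty$ Bernstein inequality for $g$ together with a Nikolskii inequality $\max_I|g|\ll (M\int_I|g|^p)^{1/p}$, which itself reduces to the $p=1$ sublemma.

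**The main obstacle.** The routine part is the reduction from $d$ dimensions to one dimension and the final summation over $r\in\Z_M$ of the local estimates. The substantive point — and the one requiring care — is the one-dimensional inequality $|g(\theta_0)|^p\ll M\int_I|g|^p$ for trigonometric polynomials of degree $\le M$ on arcs of length $1/M$; this is a packaging of Bernstein's and Nikolskii's inequalities for trigonometric polynomials, and although entirely classical, getting the dependence on $p$ (and only on $p$, not on $M$) correct is the crux. Everything else — Fubini for the dimension iteration, and translating the arc-average estimates into the stated sum over the lattice — is bookkeeping.
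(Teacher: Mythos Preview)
Your approach is correct and leads to the stated inequality, but it is genuinely different from the paper's. You reduce to $d=1$ by iterated Fubini and then appeal to the one-dimensional Marcinkiewicz--Zygmund inequality, which you propose to prove via a local Nikolskii/Bernstein estimate on arcs of length $1/M$. The paper instead works directly in $d$ dimensions with a Fej\'er-type majorant: it sets $g_{\bfM} = 2^d \Delta_{2\bfM} - \Delta_{\bfM}$, checks that $g_{\bfM} \geq 1$ on $[M_1]\times\dotsb\times[M_d]$, and then invokes the convolution/Jensen argument of Green~\cite[Lemma~6.5]{green:rothprimes}, whose key input is the non-negativity of the Fej\'er kernel on $\T^d$. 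That route avoids both the dimension-reduction step and any Bernstein-type analysis; the majorant does all the work. Your route is more classical-harmonic-analysis in flavour and makes the one-dimensional content explicit, but your sketch of the local inequality $|g(\theta_0)|^p \ll M\int_I|g|^p$ is the one place where more care is needed than you indicate: the ``bootstrap $\|g'\|_{L^\infty(I)}$ against $\|g\|_{L^1}$'' step mixes local and global norms in a way that does not close as written, since Bernstein's inequality is a global estimate. The global statement $\sum_r |g(r/M)|^p \ll M\int_\T |g|^p$ is of course classical and can be recovered cleanly by this kind of argument, but the paper's kernel approach sidesteps the issue entirely.
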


\begin{proof}
Define the usual triangular functions
$\Delta_{M_i} : \Z \rightarrow \C$
by $\Delta_{M_i}(m) = (1 - \frac{|m|}{M_i})^+$
for $1 \leq i \leq d$,
and consider the tensor product
$\Delta_{\bfM} = \Delta_{M_1} \otimes \dotsb \otimes \Delta_{M_d}$.
We have $ \Delta_{M_i}(m_i) + 1 \leq 2 \Delta_{2M_i}(m_i)$
for all $m_i \in [1,M_i]$ for all $i$ (with equality in fact), 
and by taking products over $i \in [d]$ we can deduce that
$g_{\bfM} \coloneqq 2^d \Delta_{2 \bfM}
- \Delta_{\bfM}$ is at least $1$ on
$[M_1] \times \dotsc \times [M_d]$.
The proof of~\cite[Lemma~6.5]{green:rothprimes}
now generalizes straightforwardly to the $d$-dimensional setting
upon replacing the function $g$ from there by $g_{\bfM}$.
\end{proof}

The key restriction estimate we need is the following,
established by Bourgain~\cite{bourgain:restrlattice}
in the continous setting.

\begin{proposition}[Bourgain]
\label{thm:restr:restrbound}
    Let $p > 6$.
    We have, uniformly in functions $f : [M] \rightarrow \C$,
    \begin{align*}
        \| S_f \|_p \ll_p \| f \|_{L^2(M)}.
    \end{align*}
\end{proposition}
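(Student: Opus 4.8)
The plan is to deduce the discrete estimate $\| S_f \|_p \ll_p \| f \|_{L^2(M)}$ from Bourgain's continuous restriction bound together with the Marcinkiewicz--Zygmund discretization of Proposition~\ref{thm:restr:discretize}. The first observation is that $S_f(x,y)$ is essentially a sampled version of a two-dimensional exponential sum: setting $\alpha = x/M$ and $\beta = y/M^2$, we have $M \cdot S_f(x,y) = \sum_{n \in [M]} f(n) e(\alpha n + \beta n^2)$, which looks like the continuous transform $V_f(\alpha,\beta)$ from~\eqref{eq:setup:Vsum} evaluated at rational points with denominators $M$ and $M^2$. The mismatch is that the two frequency variables live at different scales ($1/M$ versus $1/M^2$), so to apply a tensor-product discretization lemma cleanly I would first linearize the quadratic phase: introduce the auxiliary function $F : \Z^2 \to \C$ supported in $[M] \times [M^2]$ defined so that $\wh{F}(\alpha,\beta) = \sum_{n} f(n) e(\alpha n + \beta n^2)$, for instance by pushing the mass of $f(n)$ onto the lattice point $(n, n^2)$, i.e. $F(a,b) = f(a) \mathbf{1}_{b = a^2}$. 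Then $\wh{F}(x/M, y/M^2) = M \cdot S_f(x,y)$ exactly, and $F$ has support in $[M_1] \times [M_2]$ with $M_1 = M$, $M_2 = M^2$.

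With this setup, Proposition~\ref{thm:restr:discretize} applied to $F$ with $d = 2$ gives
\begin{align*}
	\sum_{x \in \Z_M} \sum_{y \in \Z_{M^2}} \big| \wh{F}(x/M, y/M^2) \big|^p
	\ll_p M \cdot M^2 \cdot \| \wh{F} \|_{L^p(\T^2)}^p,
\end{align*}
and the left side is $M^p \sum_{\mathbf{z}} |S_f(\mathbf{z})|^p = M^p \cdot M^3 \cdot \| S_f \|_p^p$ after accounting for the $L^p$ normalization on $\Z_M \times \Z_{M^2}$ (which carries a factor $M^{-3}$). Thus $\| S_f \|_p^p \ll_p M^{-p} \| \wh{F} \|_{L^p(\T^2)}^p$. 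Now $\| \wh{F} \|_{L^p(\T^2)}$ is precisely the continuous quantity $\big( \int_{\T^2} | \sum_n f(n) e(\alpha n + \beta n^2) |^p \, d\alpha \, d\beta \big)^{1/p}$, and Bourgain's continuous restriction theorem for $p > 6$ bounds this by $\ll_p (\sum_n |f(n)|^2)^{1/2} = (M \| f \|_{L^2(M)}^2)^{1/2} = M^{1/2} \| f \|_{L^2(M)}$. Wait --- I should double-check the normalization of Bourgain's bound: his estimate is typically stated as $\| \sum_n a_n e(\alpha n + \beta n^2) \|_{L^p(\T^2)} \ll_p \| a \|_{\ell^2}$ for sequences supported in $[M]$, with no dependence on $M$. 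Plugging $a_n = f(n)$, this gives $\| \wh{F} \|_{L^p(\T^2)} \ll_p \| f \|_{\ell^2} = M^{1/2} \| f \|_{L^2(M)}$. Combining, $\| S_f \|_p \ll_p M^{-1} \cdot M^{1/2} \| f \|_{L^2(M)}$, which carries an extra factor $M^{-1/2}$ --- so I have the normalizations slightly off and would need to recheck the counting of $M$-powers in the Plancherel/normalization bookkeeping; the essential point is that all powers of $M$ cancel because both sides scale the same way under the natural dilation, and the correct statement is $\| S_f \|_p \ll_p \| f \|_{L^2(M)}$ as claimed.

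The main obstacle, and the step deserving the most care, is exactly this normalization bookkeeping: keeping straight the $M$-powers coming from (a) the $L^p(\Z_M \times \Z_{M^2})$ averaging normalization versus unnormalized sums, (b) the factor $M \cdot M^2$ in Proposition~\ref{thm:restr:discretize}, (c) the relation $\wh{F}(x/M,y/M^2) = M \cdot S_f(x,y)$, and (d) the precise normalization in Bourgain's continuous theorem. A cleaner alternative that avoids the artificial lattice function $F$ is to apply a one-dimensional Marcinkiewicz--Zygmund argument to the function $n \mapsto f(n) e(\beta n^2)$ for each fixed $\beta$, discretizing only the linear variable first and then the quadratic one; but the tensor-product route via Proposition~\ref{thm:restr:discretize} is more direct given what has been set up. One should also verify that the implicit constant genuinely depends only on $p$ (and the fixed $d=2$), which is immediate since Proposition~\ref{thm:restr:discretize} and Bourgain's theorem both have constants depending only on $p$ and $d$. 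No arithmetic input or circle-method analysis is needed here --- the estimate is purely a transfer of Bourgain's analytic bound to the discrete lattice, which is why this section is short.
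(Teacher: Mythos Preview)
Your approach is exactly the paper's: lift $f$ to the function $g(n,m)=f(n)\mathbf{1}_{m=n^2}$ supported on $[M]\times[M^2]$, apply the Marcinkiewicz--Zygmund discretization (Proposition~\ref{thm:restr:discretize}) with $d=2$, and then invoke Bourgain's continuous restriction theorem for $V_f$. Your arithmetic up to the point $\|S_f\|_p \ll_p M^{-1}\|V_f\|_{L^p(\T^2)}$ is in fact correct; the ``extra $M^{-1/2}$'' you flag is not a bookkeeping slip but a misstatement of Bourgain's inequality. For sequences supported on $[M]$ and $p>6$, the sharp form is
\[
\|V_f\|_{L^p(\T^2)} \ll_p M^{1/2 - 3/p}\,\|f\|_{\ell^2},
\]
not $\ll_p \|f\|_{\ell^2}$ (the latter is too strong, as the test $f=1_{[M]}$ already shows: $\|V_1\|_p \asymp M^{1-3/p}$ while $\|1_{[M]}\|_{\ell^2}=M^{1/2}$). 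Plugging the correct form in yields $\|S_f\|_p \ll_p M^{-1}\cdot M^{1/2-3/p}\|f\|_{\ell^2} = M^{-3/p}\|f\|_{L^2(M)} \leq \|f\|_{L^2(M)}$, which is the claimed bound (with a little room to spare). So there is no gap, only the normalization of Bourgain's theorem to correct.
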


\begin{proof}
Apply Proposition~\ref{thm:restr:discretize}
to the function $g(n,m) = f(n) 1(m=n^2)$ supported on $[M] \times [M^2]$.
Recalling~\eqref{eq:setup:Ssum} and~\eqref{eq:setup:Vsum}, this yields
\begin{align*}
    \| S_f \|_p^p
    &= M^{-p}
        \sum_{x \in \Z_M} \sum_{y \in \Z_{M^2}}
        	\Big| \wh{g}\Big( \frac{x}{M}, \frac{y}{M^2} \Big) \Big|^p \\
    &\ll_p M^{-p} \cdot M \cdot M^2 
        \iint_{\T^2} 
        	|\wh{g}(\alpha,\beta)|^p
        \mathrm{d}\alpha \mathrm{d}\beta \\
    &= M^{3-p} \| V_f \|_p^p.
\end{align*}
The proposition then follows from Bourgain's restriction 
estimate~\cite[(3.115)]{bourgain:restrlattice}
(see~\cite[(1.7),~(3.1)]{bourgain:restrlattice} for
the notation used there)
and renormalizing.
\end{proof}

We note that using Keil's estimate~\cite[Theorem 2.1]{keil:diagonal} instead,
we could obtain an analogue of Proposition~\ref{thm:restr:restrbound}
with $\| f \|_\infty$ in place of $\| f \|_2$,
however this would not suffice for our argument in its present form.

\section{From non-uniformity to large energy}
\label{sec:multlin}

In this section we proceed with the first step
of our energy increment strategy,
which consists in converting the physical-space information 
that a set contains few solutions to~\eqref{eq:intro:quadsyst}
into an exploitable harmonic information: 
that the quadratic Fourier transform of this set
has a large $s$-th moment.
This part of our argument is very similar
to the work of Keil~\cite{keil:diagonal} and Smith~\cite{smith:diagonal}, 
and in particular we borrow highly non-trivial number-theoretic estimates from there;
one minor technical difference (at this point) is that
we work with the discrete Fourier transform.

We first need to bound the number of 
trivial solutions to~\eqref{eq:intro:quadsyst},
and for that purpose we import two supplementary
estimates on even moments of exponential sums from the 
litterature:~\cite[Proposition~2.1]{bourgain:restrlattice}
and~\cite[Lemma~5.1]{keil:diagonal}.
Both of these concern the exponential sum 
$V(\alpha,\beta) = \sum_{n \in [N]} e(\alpha n + \beta n^2)$
viewed as a function $V : \T^2 \rightarrow \C$,
and can be established by relatively
elementary divisor considerations.

\begin{proposition}
\label{thm:multlin:evenmoments}
    We have
    \begin{align*}
        \| V \|_4^4 \ll N^2,
        \quad\text{and}\quad
        \| V \|_6^6 \ll N^3 \log N.
    \end{align*}
\end{proposition}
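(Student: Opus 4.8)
The plan is to reinterpret each moment as a count of solutions to a Diophantine system via orthogonality, and then to bound those counts by elementary lattice-point considerations. Expanding $|V|^4$ and integrating over $\T^2$ shows that $\| V \|_4^4$ counts the solutions $(n_1,n_2,n_3,n_4) \in [N]^4$ of the system $n_1 + n_2 = n_3 + n_4$, $n_1^2 + n_2^2 = n_3^2 + n_4^2$; fixing $(n_3,n_4)$, the companion pair $(n_1,n_2)$ then has prescribed sum and sum of squares, hence a prescribed product, so $\{n_1,n_2\}$ is determined up to $O(1)$ choices. Summing over $(n_3,n_4) \in [N]^2$ gives $\| V \|_4^4 \ll N^2$.

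For the sixth moment, $\| V \|_6^6$ counts the solutions of $n_1+n_2+n_3 = n_4+n_5+n_6$, $n_1^2+n_2^2+n_3^2 = n_4^2+n_5^2+n_6^2$ in $[N]^6$. I would write $n_{3+i} = n_i + t_i$ with $|t_i| < N$, so that the first equation becomes $t_1+t_2+t_3 = 0$; eliminating $t_3 = -t_1-t_2$ turns the second equation into the \emph{linear} relation
\[
	t_1 u + t_2 v = -\bigl( t_1^2 + t_1 t_2 + t_2^2 \bigr), \qquad u = n_1 - n_3, \quad v = n_2 - n_3.
\]
The degenerate cases $t_1 = t_2 = 0$ (the diagonal) and exactly one of $t_1, t_2$ equal to zero contribute $O(N^3)$ solutions in total, by inspection. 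When $t_1 t_2 \neq 0$, the integer points $(u,v) \in [-N,N]^2$ on the above line form an arithmetic progression of length $\ll 1 + N \gcd(t_1,t_2) / \max(|t_1|,|t_2|)$, and $n_3$ then ranges freely over $[N]$; summing over $t_1, t_2$ yields a bound
\[
	\ll N^3 + N^2 \sum_{1 \leq t_1 \leq t_2 \leq N} \frac{\gcd(t_1,t_2)}{t_2}.
\]
Writing $g = \gcd(t_1,t_2)$ and $t_2 = gb$, the summand equals $1/b$ while there are at most $b$ admissible values of $t_1$ for each $(g,b)$, so the last sum is $\ll \sum_{g \leq N} \lfloor N/g \rfloor \ll N \log N$, whence $\| V \|_6^6 \ll N^3 \log N$.

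The only delicate point in this argument is the origin of the logarithmic factor, which is precisely the gcd-sum $\sum_{t_1, t_2} \gcd(t_1,t_2)/\max(|t_1|,|t_2|) \ll N \log N$ above; all other steps are routine bookkeeping. Both estimates are in any case classical, and as recorded above they may equally be quoted from the work of Bourgain~\cite{bourgain:restrlattice} and Keil~\cite{keil:diagonal}.
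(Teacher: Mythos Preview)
Your argument is correct. The paper does not actually prove this proposition: it imports the two estimates from Bourgain~\cite[Proposition~2.1]{bourgain:restrlattice} and Keil~\cite[Lemma~5.1]{keil:diagonal}, remarking only that they ``can be established by relatively elementary divisor considerations.'' Your proof supplies precisely such an elementary argument, and the details (the sum--product determination for the fourth moment, and the reduction of the sixth moment to a gcd-weighted lattice-point count on lines) are all standard and correctly executed.
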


We can now bound the number of trivial solutions easily,
assuming that $s \geq 7$.

\begin{proposition}
\label{thm:multlin:trivsols}
    Suppose that $A \subset [N]$ contains no non-trivial
    solutions to~\eqref{eq:intro:quadsyst}.
    Then
    \begin{align*}
        T(1_A,\dotsc,1_A) \ll \frac{\log N}{N}.
    \end{align*}
\end{proposition}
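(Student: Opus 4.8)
The plan is to count the trivial solutions to~\eqref{eq:intro:quadsyst} by classifying them according to which variables coincide, and to bound each resulting degenerate count using the even-moment estimates of Proposition~\ref{thm:multlin:evenmoments}. First I would observe that a solution $(n_1,\dots,n_s)$ with entries in $[N]$ is trivial precisely when some pair of coordinates agree, say $n_i = n_j$ with $i \neq j$; by the union bound it suffices to bound, for each such pair, the number of solutions with that coincidence. After relabelling, it is enough to bound the number of solutions with $n_{s-1} = n_s$, since summing $\binom{s}{2}$ such quantities only costs a constant. Setting $n_{s-1} = n_s =: m$ and writing $\mu = \lambda_{s-1} + \lambda_s$, the system collapses to
\begin{align*}
	\lambda_1 n_1 + \dotsb + \lambda_{s-2} n_{s-2} + \mu m &= 0, \\
	\lambda_1 n_1^2 + \dotsb + \lambda_{s-2} n_{s-2}^2 + \mu m^2 &= 0,
\end{align*}
a system in $s-1 \geq 6$ variables, for which I would bound the number of solutions by the standard circle-method expression
\begin{align*}
	\iint_{\T^2} V(\lambda_1 \alpha, \lambda_1 \beta) \dotsm V(\lambda_{s-2} \alpha, \lambda_{s-2} \beta)\, V(\mu \alpha, \mu \beta)\, \mathrm{d}\alpha\, \mathrm{d}\beta,
\end{align*}
where $V(\alpha,\beta) = \sum_{n \in [N]} e(\alpha n + \beta n^2)$ as in the statement preceding Proposition~\ref{thm:multlin:evenmoments} (here I interpret the $m$-integral trivially if $\mu = 0$, in which case $m$ is a free variable contributing a factor $N$ and one works with $s-2 \geq 5$ variables instead — this case is in fact easier).

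The key step is then a H\"older argument on this integral. Since $s - 1 \geq 6$, I would bound all but six of the factors by $\|V\|_\infty \leq N$ in absolute value — this is legitimate as long as at least six factors remain, which holds for $s \geq 7$ — and apply H\"older's inequality to the remaining six factors, distributing them as (at worst) three $L^4$ norms and splitting as needed, or more cleanly bounding six factors by $\|V\|_6^6$ after noting that each $V(\lambda \alpha, \lambda \beta)$ has the same $L^p(\T^2)$ norm as $V(\alpha,\beta)$ up to the constant coming from the (finitely many) sheets of the substitution $(\alpha,\beta) \mapsto (\lambda\alpha, \lambda^2\beta)$, which is absorbed into the implicit constant. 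Concretely: $N^{s-7}$ from the sup bounds, times $\|V\|_6^6 \ll N^3 \log N$ from Proposition~\ref{thm:multlin:evenmoments}, gives $\ll N^{s-4}\log N$ solutions with $n_{s-1}=n_s$. Multiplying the normalization $M^{-(s-3)} \sim N^{-(s-3)}$ (since $M \sim N$) yields a contribution $\ll N^{-1}\log N$, as desired; summing over the $O(1)$ choices of coinciding pair preserves the bound.

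The main obstacle I anticipate is purely bookkeeping: making sure that after identifying a pair of variables one genuinely retains \emph{six} nontrivially quadratic factors to which the $L^6$ estimate applies, and handling the edge case $\mu = \lambda_{s-1}+\lambda_s = 0$ where the reduced system has only $s-2$ variables with a free parameter. In that degenerate case one has $s - 2 \geq 5$ quadratic factors plus a free sum of length $N$, so the count is $\ll N \cdot N^{s-7} \|V\|_4^4 \cdot \|V\|_\infty^{\,?}$ — here one uses the $L^4$ bound $\|V\|_4^4 \ll N^2$ on four factors and $\|V\|_\infty \leq N$ on the remaining $s-6 \geq 1$ factors, giving $\ll N \cdot N^2 \cdot N^{s-6} = N^{s-3}$ solutions, hence a contribution $\ll 1$ after normalization, which is even better than $N^{-1}\log N$ and in particular consistent with the claimed bound. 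No genuinely deep input is needed here: all the arithmetic difficulty is already packaged in Proposition~\ref{thm:multlin:evenmoments}, and the role of the hypothesis $s \geq 7$ is exactly to leave enough factors for H\"older after a single coincidence is forced.
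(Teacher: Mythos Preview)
Your approach is essentially identical to the paper's: a union bound over pairs $i \neq j$ with $n_i = n_j$, collapsing to a shorter system in $s-1$ or $s-2$ variables, expressing the count as a circle-method integral, and bounding via H\"older using the $L^6$ and $L^4$ moment estimates of Proposition~\ref{thm:multlin:evenmoments}. The generic case $\mu = \lambda_{s-1}+\lambda_s \neq 0$ is handled correctly and matches the paper almost word for word.

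There is, however, a real slip in your degenerate case $\mu = 0$. You rightly account for the free variable $m$ contributing a factor $N$, and your arithmetic $N \cdot N^{s-6}\|V\|_4^4 \ll N^{s-3}$ is correct, but the conclusion ``contribution $\ll 1$ after normalization, which is even better than $N^{-1}\log N$'' is backwards: a bound of $O(1)$ is much \emph{weaker} than $O(N^{-1}\log N)$, so as written this case does not establish the proposition. For $s \geq 8$ the fix is immediate --- use the $L^6$ bound on six of the $s-2 \geq 6$ surviving factors to get $N \cdot N^{s-8}\|V\|_6^6 \ll N^{s-4}\log N$ as required. For $s = 7$ only five factors survive; the sharpest H\"older split (three in $L^6$, two in $L^4$, since $\tfrac{3}{6}+\tfrac{2}{4}=1$) yields $N \cdot \|V\|_6^3\|V\|_4^2 \ll N^{7/2}(\log N)^{1/2}$, hence contribution $\ll N^{-1/2}(\log N)^{1/2}$ --- weaker than the stated $N^{-1}\log N$, though still $o(1)$ and adequate for the sole downstream application in Proposition~\ref{thm:multlin:largeenergy} after a harmless strengthening of the hypothesis on $N$. (The paper's own proof, incidentally, silently drops the free-variable factor $N$ when passing to $r = s-2$, so the same lacuna is present there.)
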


\begin{proof}
    Consider distinct indices $i,j \in [s]$.
    The number of solutions $(n_1,\dots,n_s) \in [N]^s$
    to~\eqref{eq:intro:quadsyst} 
    with $n_i = n_j$ is bounded by the number
    of solutions $(m_1,\dots,m_r) \in [N]^r$ 
    to a new system of the form
    \begin{align*}
        \mu_1 m_1 + \dots + \mu_r m_r &= 0, \\
        \mu_1 m_1^2 + \dots + \mu_r m_r^2 &= 0,
    \end{align*}
    where $\mu_i \in \Z \smallsetminus \{0\}$,
    and $r = s-2$ or $r=s-1$ 
    according to whether $\lambda_i + \lambda_j = 0$ or not.
    By the continuous circle method and Hölder's inequality,
    the number of such solutions is at most
    \begin{align*}
        \Big| \int_{\T^2} V( \mu_1 \bfgamma)
        \cdots V( \mu_r \bfgamma)
        \mathrm{d}\bfgamma \Big|
        \leq \| V \|_r^r,
    \end{align*}
    where we have used the $1$-periodicity of $V$.
    When $r \geq 6$ we have, by the second bound
    in Proposition~\ref{thm:multlin:evenmoments},
    \begin{align*}
        \| V \|_r^r \leq \| V \|_\infty^{r-6} \| V \|_6^6
        \ll N^{r-3} \log N \leq N^{s-4} \log N.
    \end{align*}
    Since $s \geq 7$, the only other possible case
    is $s=7$, $r=5$ for which the first bound
    in Proposition~\ref{thm:multlin:evenmoments}
    provides a similar (and even stronger) conclusion.
    Summing finally over all $\binom{s}{2}$ indices
    $i,j$ and recalling the normalizing factor
    in~\eqref{eq:setup:countingop},
    this concludes the proof.
\end{proof}

We also need an estimate on the number
of solutions to the system~\eqref{eq:intro:quadsyst}
in the complete integer interval $[N]$.
Luckily for us, this is provided by the delicate
circle method analysis of Keil~\cite[Proposition~7.1]{keil:diagonal}
and Smith~\cite{smith:diagonal}.

\begin{proposition}[Smith, Keil]
\label{thm:multlin:intgsols}
    For $N \geq C$, we have
    \begin{align*}
        T( 1_{[N]} , \dots , 1_{[N]} ) \gg 1.
    \end{align*}
\end{proposition}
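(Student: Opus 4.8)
The plan is to run the Hardy--Littlewood circle method for the system~\eqref{eq:intro:quadsyst}, with the even-moment estimates of Proposition~\ref{thm:multlin:evenmoments} supplying the minor-arc input. First I would pass to the continuous side: since $1_{[N]}$ is supported in $[N]\subset[M]$, the definition~\eqref{eq:setup:countingop} together with orthogonality on $\T^2$ gives
\[
	T(1_{[N]},\dotsc,1_{[N]}) = M^{-(s-3)} \int_{\T^2} \prod_{i=1}^s V(\lambda_i \bfgamma)\,\mathrm{d}\bfgamma,
\]
where $V(\alpha,\beta)=\sum_{n\in[N]}e(\alpha n+\beta n^2)$ is the exponential sum of Proposition~\ref{thm:multlin:evenmoments} and $\bfgamma=(\alpha,\beta)$. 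As $M\sim N$, it suffices to prove that this integral is $\gg N^{s-3}$.

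Next I would dissect $\T^2$ into major arcs $\mathfrak M$ --- the $\bfgamma$ lying within $(\log N)^C N^{-1}$ of some $a/q$ and within $(\log N)^C N^{-2}$ of some $b/q$ with a common denominator $q\leq(\log N)^C$ (taken slightly generously so that $\mathfrak M$ is stable under scaling by the $\lambda_i$) --- and the complementary minor arcs $\mathfrak m$. On $\mathfrak m$, Weyl's inequality for the quadratic sum $V$ gives $\sup_{\bfgamma\in\mathfrak m}|V(\lambda_i\bfgamma)|\ll N^{1-\sigma}$ for some $\sigma=\sigma(C)>0$ and every $i$; extracting $s-6$ such factors and bounding the remaining six via H\"older's inequality, the $1$-periodicity of $V$ and the second estimate of Proposition~\ref{thm:multlin:evenmoments} --- exactly as in the proof of Proposition~\ref{thm:multlin:trivsols} --- yields a minor-arc contribution $\ll N^{(1-\sigma)(s-6)}\cdot N^3\log N$, which is $o(N^{s-3})$ precisely because $s\geq7$. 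The sign and translation hypotheses play no role at this stage.

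On the major arcs, the standard approximation $V(a/q+\theta,\,b/q+\phi)\approx q^{-1}S(\lambda a,\lambda b;q)\int_0^N e(\theta t+\phi t^2)\,\mathrm{d}t$, with $S$ the associated quadratic Gauss sum, produces after the usual manipulations a main term $\bigl(\mathfrak S\,\mathfrak J+o(1)\bigr)N^{s-3}$, where $\mathfrak J$ is the singular integral and $\mathfrak S=\prod_p\sigma_p$ the singular series. Convergence of $\mathfrak S$ is routine once $s\geq7$, using standard bounds for the Gauss sums $S(\cdot,\cdot;q)$; and $\mathfrak J>0$ encodes real solubility of~\eqref{eq:intro:quadsyst}, which is guaranteed by the hypothesis that at least two of the $\lambda_i$ are positive and at least two negative, this producing a nonsingular real zero of the system with all coordinates in the open unit cube, around which $\mathfrak J$ accumulates positive mass (see~\cite{keil:diagonal}). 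It then remains to show that $\sigma_p>0$ for \emph{every} prime $p$.

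This last point --- $p$-adic solubility with a nonsingular zero --- is where the real work lies, and I expect it to be the main obstacle. The diagonal solution $(1,\dotsc,1)$, available because $\sum_i\lambda_i=0$, has Jacobian of rank $1$ and is thus singular, so one genuinely has to exhibit an off-diagonal nonsingular zero of~\eqref{eq:intro:quadsyst} in each $\Q_p$, handling the small primes by an explicit Hensel-type argument that uses the abundance of variables $s\geq7$ together with the sign and translation conditions. Rather than reproduce this delicate local analysis, I would invoke it directly from Keil~\cite[Proposition~7.1]{keil:diagonal} and Smith~\cite{smith:diagonal}, which is legitimate since we make no claim of improvement on the circle-method side. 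Combining the positive major-arc main term with the negligible minor-arc estimate then gives $T(1_{[N]},\dotsc,1_{[N]})\gg1$ for all $N\geq C$.
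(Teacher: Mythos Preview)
Your sketch is correct, but note that the paper does not actually supply a proof of this proposition at all: it is stated as a black-box input, attributed to Smith~\cite{smith:diagonal} and Keil~\cite[Proposition~7.1]{keil:diagonal}, with the surrounding text explicitly saying ``we make no further contribution'' to this circle-method analysis. Your proposal is therefore strictly more detailed than the paper's treatment, which consists solely of the citation.

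That said, the outline you give is the right one and matches what happens in those references: the passage to the continuous integral, the major/minor arc split with the minor arcs controlled via Weyl's inequality together with the sixth-moment bound from Proposition~\ref{thm:multlin:evenmoments} (this is exactly where $s\geq 7$ enters), and the major-arc main term $\mathfrak{S}\,\mathfrak{J}\,N^{s-3}$ with positivity of $\mathfrak{J}$ coming from the sign hypothesis. You also correctly identify the genuinely delicate point --- nonsingular $p$-adic solubility for every prime $p$, the diagonal solution being singular --- and, like the paper, you defer this to Keil and Smith rather than reproduce it. One small imprecision: on the minor arcs where $\beta$ is already major-arc-close to a rational with small denominator but $\alpha$ is not, Weyl's inequality alone does not give the saving; one needs an additional argument extracting cancellation from the linear phase, which is standard and handled in the cited works. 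Since you are ultimately invoking \cite[Proposition~7.1]{keil:diagonal} anyway, this does not affect the validity of your proposal.
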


It is now easy to derive the conclusion of large Fourier energy
in the non-uniform case, via the usual multilinear expansion process
and Hölder's inequality.

\begin{proposition}[Non-uniformity implies large energy]
\label{thm:multlin:largeenergy}
    Suppose that $A$ is a subset of $[N]$
    of density $\delta$ containing no non-trivial solutions 
    to~\eqref{eq:intro:quadsyst}, and that $N \geq C\delta^{-2s}$.
    Then
    \begin{align*}
        1 \ll \| S_{f_A / \delta} \|_s.
    \end{align*}
\end{proposition}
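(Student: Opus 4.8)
The plan is to run the standard multilinear-expansion argument. Writing $1_A = \delta 1_{[N]} + f_A$ and expanding the counting operator by multilinearity, one obtains
\begin{align*}
	T(1_A,\dots,1_A) = \delta^s\, T(1_{[N]},\dots,1_{[N]})
	+ \sum_{\varnothing \neq S \subseteq [s]} \delta^{s-|S|}\,
	T\big( (1_{[N]})_{i \notin S},\, (f_A)_{i \in S} \big),
\end{align*}
where in the $S$-term the $i$-th argument is $f_A$ when $i \in S$ and $1_{[N]}$ otherwise. By Proposition~\ref{thm:multlin:intgsols} the main term satisfies $\delta^s T(1_{[N]},\dots,1_{[N]}) \gg \delta^s$ once $N \geq C$, while by Proposition~\ref{thm:multlin:trivsols} the entire left-hand side is $\ll (\log N)/N$, since every solution to~\eqref{eq:intro:quadsyst} in $A$ is trivial. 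The hypothesis $N \geq C\delta^{-2s}$ is exactly what makes the error term $(\log N)/N$ smaller than half the main term: since $t \mapsto t/\log t$ is increasing it suffices to check this at $t = C\delta^{-2s}$, where $\delta^{-2s}/\log(\delta^{-2s})$ comfortably exceeds a fixed multiple of $\delta^{-s}$ provided $C$ is large enough in terms of $s$ and the $\lambda_i$. Hence $\big| \sum_{\varnothing \neq S} \delta^{s-|S|} T((1_{[N]})_{i \notin S}, (f_A)_{i \in S}) \big| \gg \delta^s$, and as the sum has only $2^s - 1 = O_s(1)$ terms, some nonempty $S$ with $k \coloneqq |S|$ satisfies
\begin{align*}
	\big| T\big( (1_{[N]})_{i \notin S},\, (f_A)_{i \in S} \big) \big| \gg_s \delta^k.
\end{align*}

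From here I would pass to the harmonic side. By Proposition~\ref{thm:setup:countingopfourier},
\begin{align*}
	T\big( (1_{[N]})_{i \notin S},\, (f_A)_{i \in S} \big)
	= \sum_{\mathbf{z} \in \Z_M \times \Z_{M^2}}
	\prod_{i \notin S} S_{1_{[N]}}(\lambda_i \mathbf{z})
	\prod_{i \in S} S_{f_A}(\lambda_i \mathbf{z}),
\end{align*}
and I would estimate this sum by H\"older's inequality. The key structural point is that each dilation $\mathbf{z} \mapsto \lambda_i \mathbf{z}$ is a bijection of $\Z_M \times \Z_{M^2}$, since $M$ is prime and coprime to $\lambda_1 \cdots \lambda_s$; this allows every factor to be measured on the same space. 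The slots carrying $f_A$ are controlled by Bourgain's restriction estimate, Proposition~\ref{thm:restr:restrbound} (together with the trivial bound $\|S_{f_A}\|_\infty \leq \|f_A\|_{L^1(M)} \ll \delta$ for any spare $f_A$-slots), while the slots carrying $1_{[N]}$ are controlled by the classical even-moment bounds of Proposition~\ref{thm:multlin:evenmoments} — transferred to the discrete sum through Proposition~\ref{thm:restr:discretize}, exactly as in the proof of Proposition~\ref{thm:restr:restrbound} — together with $\|S_{1_{[N]}}\|_\infty = N/M < 1$. The assumption $s \geq 7$ is what guarantees enough factors are available for a genuine restriction exponent $> 6$ to be in play on the $1_{[N]}$ side. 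Collecting these estimates, dividing out the $\delta^{-k}$ produced by writing $S_{f_A} = \delta\, S_{f_A/\delta}$, and renormalising, one arrives at $1 \ll \|S_{f_A/\delta}\|_s$.

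The step requiring the most care — and the main obstacle — is the H\"older bookkeeping in the last paragraph: the $s$ factors must be distributed among the available $L^p$ estimates so that the power of $\delta$ emerging from $S_{f_A} = \delta\, S_{f_A/\delta}$ is matched exactly against the lower bound $\delta^k$, and in particular so that the $f_A$-slots are never measured in too low a moment of $S_{f_A/\delta}$; this is delicate precisely when $k$ is small, i.e.\ when only one or two arguments carry $f_A$, and one must check that such terms still yield (or cannot dominate without yielding) the desired $s$-th moment bound. The logarithmic losses coming from the sixth-moment estimate $\|V\|_6^6 \ll N^3 \log N$ are harmless here and are absorbed without difficulty, but they need to be tracked.
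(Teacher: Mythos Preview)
Your overall structure matches the paper's proof exactly: expand $1_A = \delta 1_{[N]} + f_A$ multilinearly, compare the trivial-solution bound with the main term to force one off-diagonal term to be $\gg \delta^k$, pass to the harmonic side via Proposition~\ref{thm:setup:countingopfourier}, and finish with H\"older. The only substantive difference is in how you handle the $1_{[N]}$ factors in the last step, and there you are making your life much harder than necessary.

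You propose to control the $1_{[N]}$ slots via the even-moment bounds of Proposition~\ref{thm:multlin:evenmoments} (discretised through Proposition~\ref{thm:restr:discretize}), and then worry about distributing exponents, spare $f_A$-slots, the small-$k$ case, and logarithmic losses. The paper bypasses all of this by observing that Bourgain's restriction bound, Proposition~\ref{thm:restr:restrbound}, applies equally well to $f = 1_{[N]}$: since $s \geq 7 > 6$ and $\|1_{[N]}\|_{L^2(M)} \ll 1$, one has $\|S_{1_{[N]}}\|_s \ll 1$ directly. One then simply applies H\"older with the uniform exponent $s$ on every factor (using, as you note, that $\mathbf{z} \mapsto \lambda_i \mathbf{z}$ is a bijection) to get
\begin{align*}
	\delta^k \ll \|S_{f_A}\|_s^{\,k}\, \|S_{1_{[N]}}\|_s^{\,s-k} \ll \|S_{f_A}\|_s^{\,k},
\end{align*}
whence $1 \ll \|S_{f_A/\delta}\|_s$. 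No case analysis on $k$, no sixth-moment logarithm, no $L^\infty$ patching. The ``main obstacle'' you identify is therefore not present; once you use the restriction bound on the $1_{[N]}$ slots as well, the bookkeeping is a one-line H\"older.
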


\begin{proof}
    We start from the bound of Proposition~\ref{thm:multlin:trivsols},
    and we expand $1_A = f_A + \delta 1_{[N]}$ by multilinearity to obtain
    \begin{align*}
        O(N^{-1/2}) 
        &= T(1_A,\dots,1_A)  \\
        &= \delta^s T( 1_{[N]}, \dotsc , 1_{[N]} )
            + \sum T( * , \dotsc , f_A , \dotsc , * ),
\end{align*}
where the sum has $2^s - 1$ terms
and the stars denote functions equal to $f_A$
or $\delta 1_{[N]}$.
By Proposition~\ref{thm:multlin:intgsols},
we know that $T( 1_{[N]}, \dotsc , 1_{[N]} ) \gg 1$,
and from our assumption on $N$ and 
the pigeonhole principle we may obtain a bound of the form
\begin{align*}
    \delta^s \ll |T(f_1,\dots,f_s)|,
\end{align*}
where $\ell \geq 1$ of the functions $f_i$
are equal to $f_A$ and $s-\ell$ of them
are equal to $\delta 1_{[N]}$.
By Proposition~\ref{thm:setup:countingopfourier}
and Hölder's inequality, and since $M$ is coprime to the $\lambda_i$, 
it follows that
\begin{align*}
    \delta^s 
    \ll \| S_{f_A} \|_s^{\ell}
    \cdot \delta^{s-\ell} \| S_{1_{[N]}} \|_{s}^{s-\ell}
    \ll \delta^{s-\ell} \| S_{f_A} \|_s^{\ell}
\end{align*}
where we have used the restriction bound
of Proposition~\ref{thm:restr:restrbound}
in the last inequality.
The inequality above is easily transformed
into the desired result.
\end{proof}

\section{Obtaining a large restricted energy}
\label{sec:energy}

We now have at our disposal an
$\ell^s$ estimate of the form
$\sum_{\bfz} |S_{f_A/\delta}(\bfz)|^s \gg 1$.
In the spirit of the Heath-Brown-Szemerédi energy-increment 
strategy~\cite{heath-brown:roth,szemeredi:roth},
our next move is to extract a larger restricted moment
$\sum_{i=1}^R |S_{f_A/\delta}(\bfz_i)|^r \gg R^c$ with $6 < r < s$,
where $R$ is a certain ``gain'' parameter of manageable size.
This is made possible by the following 
innocent-looking combinatorial lemma,
which is implicitely present in the exposition 
of Green~\cite[Lemma~4]{green:roth}.

\begin{lemma}[Energy pigeonholing]
\label{thm:energy:Lspigeon}
    Let $X \geq 1$ and $0 < r < s$ be parameters.
    Suppose that $(a_k)_{k \geq 1}$ 
    is a finite non-increasing sequence of 
    non-negative real numbers such that
    \begin{align}
    \label{eq:energy:sumsak}
        \sum_k a_k^s \gg 1
        \quad\text{and}\quad
        \sum_k a_k^r \leq X.
    \end{align}
    Then there exists $1 \leq R \ll_{r,s} X^{s/(s-r)}$
    such that
    \begin{align*}
        \sum_{k \leq R} a_k^r \gg_{r,s} R^{(s-r)/2s}.
    \end{align*}
\end{lemma}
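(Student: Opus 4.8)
The plan is to argue by contradiction, the point being that the exponent $(s-r)/2s$ in the conclusion lies strictly below the critical value $1 - r/s = (s-r)/s$. Write $c_0 > 0$ for the implied constant in the hypothesis, so that $\sum_k a_k^s \geq c_0$, and let $C$ (large) and $\eta$ (small) be constants depending only on $r,s$ (and $c_0$), to be fixed at the end. Suppose for contradiction that
\[
	\sum_{k \leq R} a_k^r < \eta\, R^{(s-r)/2s}
	\qquad \text{for every integer } 1 \leq R \leq C X^{s/(s-r)};
\]
I will show this forces $\sum_k a_k^s < c_0$, a contradiction. Its negation is exactly the conclusion of the lemma, with implied constant $\eta$ and with the chosen $R$ satisfying $R \ll_{r,s} X^{s/(s-r)}$.

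First I would treat the range $1 \leq R \leq T$, where $T \coloneqq \lfloor C X^{s/(s-r)} \rfloor$, using monotonicity alone. Since $a_1 \geq \dots \geq a_R$, we have $R\, a_R^r \leq \sum_{k \leq R} a_k^r < \eta R^{(s-r)/2s}$, hence $a_R^r < \eta R^{-(s+r)/2s}$ and therefore $a_R^s < \eta^{s/r} R^{-(s+r)/2r}$. Since $s > r$, the exponent $(s+r)/2r$ exceeds $1$, so these powers are summable and
\[
	\sum_{1 \leq R \leq T} a_R^s
	\leq \eta^{s/r} \sum_{R \geq 1} R^{-(s+r)/2r}
	= \eta^{s/r}\, \zeta\!\big((s+r)/2r\big)
	\ll_{r,s} \eta^{s/r}.
\]

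Next I would treat $R > T$, now invoking the hypothesis $\sum_k a_k^r \leq X$: monotonicity gives $R\, a_R^r \leq \sum_{k \leq R} a_k^r \leq X$, so $a_R^s \leq (X/R)^{s/r}$, and since $s/r > 1$,
\[
	\sum_{R > T} a_R^s
	\leq X^{s/r} \sum_{R > T} R^{-s/r}
	\ll_{r,s} X^{s/r}\, T^{-(s-r)/r}
	\ll_{r,s} C^{-(s-r)/r},
\]
where in the last step I used $T \geq \tfrac12 C X^{s/(s-r)}$ (valid for $C \geq 2$, as $X \geq 1$), whence $T^{s-r} \gg_{r,s} C^{s-r} X^s$. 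Finally, choose $C$ large enough that the second sum is at most $c_0/2$, and then $\eta$ small enough that the first sum is at most $c_0/2$; since the ranges $[1,T]$ and $(T,\infty)$ together exhaust the finite sequence, adding the two bounds gives $\sum_k a_k^s < c_0$, the desired contradiction.

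The calculations are routine geometric-series estimates; the only thing to be careful about is the exponent bookkeeping — specifically, checking that the negated conclusion yields a \emph{summable} pointwise bound $a_R^s \ll_{} \eta^{s/r} R^{-(s+r)/2r}$, which is precisely where both $r < s$ and the slack in the exponent $(s-r)/2s$ are used — together with confirming that $[1,T]$ and $(T,\infty)$ cover all indices. I do not anticipate any genuine obstacle: this is the pigeonholing step from Green's exposition of Roth's theorem, recast in the present form.
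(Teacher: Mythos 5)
Your proof is correct and rests on the same two ingredients as the paper's: the Markov-type bound $a_R^r \leq (\sum_{k\leq R}a_k^r)/R$ from monotonicity, and the observation that the exponent $(s-r)/2s$ is strictly smaller than $1-r/s$, leaving slack that makes the relevant power sums converge; the cutoff $T \sim X^{s/(s-r)}$ and the split into $R \leq T$ and $R > T$ also mirror the paper's truncation at $Y = CX^{s/(s-r)}$. The only difference is organizational: the paper argues forward (truncate the $s$-th moment, pigeonhole to find one large $a_R$, then multiply by $R$), whereas you negate the conclusion globally and show the head and tail of $\sum a_k^s$ together fall short of $c_0$ — the same estimates, packaged as a single contradiction.
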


\begin{proof}
Since $(a_k)$ is non-increasing,
we have a Markov-type bound
\begin{align*}
    j a_j^r \leq \sum_{k \leq j} a_k^r \leq X,
\end{align*}
that is $a_j \leq X^{1/r} j^{-1/r}$ for all $j$.
Since $s > r$ we have therefore, for any $Y \geq 1$,
\begin{align*}
    \sum_{k > Y} a_k^s
    \leq X^{s/r} \sum_{k > Y} k^{-s/r}
    \ll_{r,s} X^{s/r} Y^{-(s-r)/r}.
\end{align*}
Recalling~\eqref{eq:energy:sumsak} and
choosing $Y = C X^{s/(s-r)}$ with $C = C(r,s)$ large enough,
we have
\begin{align}
\label{eq:energy:minosumak}
    \sum_{k \leq Y} a_k^s \gg 1.
\end{align}
Let $\theta,\eta > 0$, and assume for contradiction
that $a_k \leq \theta k^{-(1+\eta)/s}$ for 
all $1 \leq k \leq Y$.
Then
\begin{align*}
    \sum_{k \leq Y} a_k^s
    \leq \theta \sum_{k \leq Y} k^{-1-\eta} 
    \ll_\eta \theta.
\end{align*}
Choosing $\theta = c(\eta)$ small enough, 
this contradicts~\eqref{eq:energy:minosumak}
and therefore we have found $1 \leq R \leq Y$
such that $a_R \gg_\eta R^{-(1+\eta)/s}$.
Hence, by monotonicity again,
\begin{align*}
    \sum_{k \leq R} a_k^r 
    \geq R a_R^r \gg_\eta R^{1 - (1+\eta) r/s}.
\end{align*}
Choosing $\eta = (s-r)/2r$,
we obtain the desired conclusion.
\end{proof}

\begin{corollary}
\label{thm:energy:restrenergy}
	Suppose that $A$ is a subset of $[N]$ of density $\delta$
	containing no non-trivial solutions 
	to~\eqref{eq:intro:quadsyst},
	and that $N \geq C\delta^{-2s}$.
	Then there exists $1 \leq R \ll \delta^{-C}$
	and distinct frequencies $\mathbf{z_1},\dotsc,\mathbf{z}_R
	\in \Z_M \times \Z_{M^2}$ such that
	\begin{align*}
		\sum_{i=1}^R |S_{f_A/\delta}(\bfz_i)|^{6.1} \gg R^c.
	\end{align*}1
\end{corollary}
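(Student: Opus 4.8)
The plan is to combine the lower bound $1 \ll \| S_{f_A/\delta} \|_s$ of Proposition~\ref{thm:multlin:largeenergy} with the upper bound on the $\ell^{6.1}$ moment of $S_{f_A/\delta}$ coming from Bourgain's restriction estimate, Proposition~\ref{thm:restr:restrbound}, and then to feed the resulting pair of moment bounds into the energy-pigeonholing Lemma~\ref{thm:energy:Lspigeon}. Concretely, I would fix the test exponent $r = 6.1$, which is admissible because $6 < r < 7 \leq s$, and arrange the finite family of values $\{ |S_{f_A/\delta}(\bfz)| : \bfz \in \Z_M \times \Z_{M^2} \}$ in non-increasing order as a sequence $(a_k)_{1 \leq k \leq M^3}$, attaching to each index $k$ a frequency $\bfz_k \in \Z_M \times \Z_{M^2}$ with $a_k = |S_{f_A/\delta}(\bfz_k)|$ and with the $\bfz_k$ pairwise distinct; this last choice is possible even in the presence of ties, since each frequency contributes exactly one term to the list.

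Next I would record the two moment bounds needed. Since $N \geq C\delta^{-2s}$, Proposition~\ref{thm:multlin:largeenergy} gives $\sum_k a_k^s = \| S_{f_A/\delta} \|_s^s \gg 1$, which is exactly the first hypothesis of Lemma~\ref{thm:energy:Lspigeon}. For the second, I would use the crude bound $\| f_A \|_{L^2(M)} \leq \| 1_A \|_{L^2(M)} + \delta \| 1_{[N]} \|_{L^2(M)} \ll \delta^{1/2}$, so that $\| f_A/\delta \|_{L^2(M)} \ll \delta^{-1/2}$, and then invoke Proposition~\ref{thm:restr:restrbound} with the exponent $r = 6.1 > 6$ to obtain
\begin{align*}
	\sum_k a_k^{6.1} = \| S_{f_A/\delta} \|_{6.1}^{6.1}
	\ll \| f_A/\delta \|_{L^2(M)}^{6.1}
	\ll \delta^{-C}.
\end{align*}

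It then remains to apply Lemma~\ref{thm:energy:Lspigeon} to the sequence $(a_k)$, with the parameters $r = 6.1$ and $s$, and with $X = C\delta^{-C} \geq 1$; the lemma outputs an integer $1 \leq R \ll X^{s/(s-6.1)} \ll \delta^{-C}$ together with $\sum_{k \leq R} a_k^{6.1} \gg R^{(s-6.1)/2s}$, and setting $c = (s-6.1)/2s > 0$ and recalling that $a_k = |S_{f_A/\delta}(\bfz_k)|$ for the distinct frequencies $\bfz_1,\dotsc,\bfz_R$ yields precisely the asserted inequality. I do not expect a genuine obstacle here: once Lemma~\ref{thm:energy:Lspigeon} is in hand, the corollary is essentially an exercise in bookkeeping. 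The one subtlety worth flagging is that one must exploit the full $\ell^r\,\text{--}\,L^2$ strength of Proposition~\ref{thm:restr:restrbound}: replacing it by an $\ell^r\,\text{--}\,L^\infty$ bound would inflate $X$, and hence the final exponent $c$, by a power of $\delta$, even though the scheme would still formally go through; and the test exponent $r$ must be kept strictly between $6$, so that the restriction estimate applies, and $s$, so that the pigeonholing lemma produces a genuine gain.
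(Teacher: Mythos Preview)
Your proof is correct and follows essentially the same route as the paper: order the values $|S_{f_A/\delta}(\bfz)|$, use Proposition~\ref{thm:multlin:largeenergy} for the $\ell^s$ lower bound and Proposition~\ref{thm:restr:restrbound} (with $r=6.1$) for the $\ell^{6.1}$ upper bound $\ll \delta^{-r/2}$, and feed both into Lemma~\ref{thm:energy:Lspigeon}. One small quibble with your closing commentary: the exponent $c = (s-r)/2s$ produced by the pigeonholing lemma does not depend on $X$, so an $\ell^r\text{--}L^\infty$ restriction bound would only enlarge the admissible range of $R$, not shrink $c$; the genuine need for the $L^2$ restriction estimate arises later, in Proposition~\ref{thm:lin:largeL2}.
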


\begin{proof}
We know that $\sum_{\bfz} |S_{f_A/\delta}(\bfz)|^s \gg 1$
by Proposition~\ref{thm:multlin:largeenergy},
and that $\sum_{\bfz} |S_{f_A/\delta}(\bfz)|^r \ll_r \delta^{-r/2}$
for every $r > 6$ by Proposition~\ref{thm:restr:restrbound}.
We fix $r = 6.1 < 7 \leq s$ for definiteness.
Ordering the absolute values $|S_{f_A/\delta}(\bfz)|$ 
for $\bfz\in \Z_M \times \Z_{M^2}$ by size,
and applying Lemma~\ref{thm:energy:Lspigeon}, 
we obtain the desired conclusion.
\end{proof}

\section{Linearization and density increment}
\label{sec:lin}

In this section we carry out the most technical
part of our argument, by which we turn the previous
large restricted energy into a density increment.
We have previously isolated a collection
of quadratic harmonics $| S_{f_A/\delta}(\bfz_i)|$
which is large in an $\ell^{6.1}$ sense,
and the next three steps consist in replacing the function $f_A$ 
in each of these harmonics by its convolution $\wt{f}_A$ 
with smaller progressions, extracting a large
second moment of $\wt{f}_A$ via a restriction bound, 
and carrying out the classical $L^2$ density increment strategy.
To alleviate technical statements, 
we fix an integer $R$ and a collection of
frequencies $\bfz_1,\dots,\bfz_R$ throughout this section,
and we define quadratic polynomials $\phi_i(n) = \alpha_i n + \beta_i n^2$,
where $\bfz_i = (x_i,y_i)$ and $\alpha_i = x_i/M, \beta_i = y_i/M^2$.
Recalling~\eqref{eq:setup:Ssum} we thus have,
for every function $h : \Z \rightarrow \C$ and $i \in [R]$,
\begin{align}
\label{eq:lin:quadratics}
	S_h(\bfz_i) = \E_{n \in [M]} h(n) e( \phi_i(n) ).
\end{align}

The only non-trivial fact from diophantine approximation that we require 
is a version by Green and Tao~\cite[Proposition~A.2]{GT:quantsz} of Schmidt's
result on simultaneous quadratic recurrence, where the
dependency of the error in $d$ is made explicit.

\begin{proposition}[Schmidt, Green-Tao]
\label{thm:lin:quadrec}
\label{thm}
    Let $\theta_1,\dots,\theta_d \in \T$ be real numbers modulo $1$,
    and let $X \geq 1$ be an integer.
    Then there exists an integer $1 \leq q \leq X$ such that
    \begin{align*}
        &\phantom{\text{for all}\quad 1 \leq i \leq d} &
        \| q^2 \theta_i \| &\ll d X^{-c/d^2}
        &&\text{uniformly in}\quad 1 \leq i \leq d.
    \end{align*}
\end{proposition}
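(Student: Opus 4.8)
The plan is to prove the proposition by induction on the number $d$ of frequencies, reproducing the argument of Schmidt~\cite{schmidt:fractionalparts} in the effective form of Green and Tao~\cite[Appendix~A]{GT:quantsz}; since the statement is quoted from there, I will only lay out the architecture. The engine is a Weyl-type dichotomy applied to the one-dimensional exponential sums $W_{\mathbf{m}}(Y) = \sum_{n \leq Y} e\big( n^2(m_1\theta_1 + \dotsb + m_d\theta_d) \big)$ as $\mathbf{m} = (m_1,\dotsc,m_d)$ ranges over nonzero integer vectors of height $|\mathbf{m}| \leq M_0$: either all of these sums are small relative to $Y$, or at least one of them is large. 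The first alternative is the ``good'' case; the second triggers a descent in the dimension.

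First I would treat the base case $d = 1$. Given a single $\theta \in \T$ one seeks $1 \leq q \leq X$ with $\| q^2\theta \| \ll X^{-c}$. Take a Weyl range $Y$ of order $\sqrt{X}$ and inspect $W = \sum_{n \leq Y} e(n^2\theta)$. If $|W|$ is small relative to $Y$, pairing the orbit $n \mapsto n^2\theta$ against a Fejér kernel on $\T$ concentrated near the origin produces $n \leq Y \leq X$ with $\| n^2\theta \|$ small. If $|W|$ is large, Weyl's inequality places $\theta$ within a good rational approximation $a/q_0$ with $q_0$ a small power of $X$, and then $q = q_0$ gives $\| q^2\theta \| = \| q_0^2(\theta - a/q_0) \| \leq q_0 \| q_0\theta - a \|$, which is small once the two thresholds are balanced; this fixes an admissible exponent for $d = 1$.

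For the inductive step I would assume the statement for $d - 1$ and, given $\theta_1,\dotsc,\theta_d$, fix a height $M_0$ and a Weyl range $Y$ (both suitable powers of $X$) and run the dichotomy. If every $W_{\mathbf{m}}(Y)$ with $0 < |\mathbf{m}| \leq M_0$ is small, a $d$-dimensional Fejér-kernel computation on $\T^d$ — where the contribution of the nonzero frequencies is bounded by $\sum_{0 < |\mathbf{m}| \leq M_0} |\widehat{F}(\mathbf{m})|$ times the common Weyl bound — yields some $n \leq Y \leq X$ with $\| n^2\theta_i \| \ll 1/M_0$ for all $i$, and we take $q = n$. Otherwise some $W_{\mathbf{m}}(Y)$ is large, and Weyl's inequality converts this into a nontrivial linear near-relation with $\| q_0(m_1\theta_1 + \dotsb + m_d\theta_d) \|$ very small for some $q_0$ of controlled size (the hypothesis that $X$ is large is far more than enough here). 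Assuming $m_d \neq 0$, this expresses $\theta_d$, up to a tiny error and a rational of bounded denominator, as an integer combination of $\theta_1,\dotsc,\theta_{d-1}$; applying the induction hypothesis to $\theta_1,\dotsc,\theta_{d-1}$ at a suitable scale and then adjusting the resulting $q$ by a bounded multiple, so as to also control $\| q^2\theta_d \|$ through the near-relation while keeping $q \leq X$, closes the induction.

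The step I expect to be the main obstacle is purely quantitative: one must propagate the parameters — the height $M_0$, the length $Y$ of the Weyl range, the exponent saved in Weyl's inequality, the scale handed down to the induction, and the size of the multiplier applied to $q$ — through all $d$ stages of the descent so that the final bound retains the shape $d X^{-c/d^2}$, with no intermediate $q$ escaping $[1,X]$ and the $d$ accumulated errors absorbed into the harmless leading factor $d$. Choosing $Y$ as a small power of $X$, of the order of $X^{c/d}$, is precisely what forces the attainable precision in the Fejér branch to be only $X^{-c/d^2}$ rather than $X^{-c/d}$, and balancing this against the loss in the descent branch is exactly the bookkeeping carried out in~\cite[Appendix~A]{GT:quantsz}.
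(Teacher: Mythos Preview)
The paper does not prove this proposition at all: it simply imports it verbatim from Green and Tao~\cite[Proposition~A.2]{GT:quantsz}, noting only that the weaker elementary bound of Croot, Lyall and Rice would also suffice. Your sketch faithfully reproduces the architecture of the Schmidt--Green--Tao argument (Weyl dichotomy, Fej\'er-kernel equidistribution in the small-sum branch, dimension descent via a near-linear relation in the large-sum branch, with the $X^{-c/d^2}$ exponent arising from the parameter balancing), so it is correct and goes beyond what the paper itself supplies.
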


We note here that for the purpose of proving
Theorem~\ref{thm:intro:mainthm}, we could work
equally well with the slightly weaker estimate
of Croot, Lyall and Rice~\cite[Theorem 1]{CLR:quadrec},
which has the advantage of having a completely
elementary proof.
Yet we stick with Proposition~\ref{thm:lin:quadrec},
only because the error term there is more concise to state.
Our first step, then, is to find a collection
of progressions on which the quadratics phases 
in~\eqref{eq:lin:quadratics} are nearly constant.
In this we follow Green and Tao~\cite{GT:quantsz},
using however the language of translates rather than
that of partitions.

\begin{proposition}[Simultaneous linearization of quadratics]
\label{thm:lin:simultlin}
    Let $\eps,\delta \in (0,1]$ be parameters,
    and suppose that $N \geq (2/\eps\delta)^{CR^3}$.
    Then there exist integers $q,(r_n)_{n \in \Z},U,V$ of size
    \begin{align*}
        &&
        U &\sim N^{c/R^2}    &     V&\sim U^{c/R}
        && \\
        &&
        q &\leq N^{1/4}      &     r_n &\leq U^{1/4}
        &&
    \end{align*}
    such that for $P = q[U]$, $Q_n = q r_n [V]$, we have
    \begin{align*}
        \| \phi_i( n + m + k ) - \phi_i( n + m ) \| \leq \eps\delta
    \end{align*}
    for all $n \in [N], m \in P, k \in Q_n$ and $i \in [R]$.
\end{proposition}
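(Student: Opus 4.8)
The plan is to expand the difference in $k$ and then control it via two successive rounds of simultaneous Diophantine approximation: a single ``global'' quadratic approximation of the leading coefficients $\beta_i$, followed by a ``local'' linear approximation --- one for each $n$ --- of the linear coefficients that remain. Writing an element $m\in P=q[U]$ as $m=qu$ with $u\in[U]$, and an element $k\in Q_n=qr_n[V]$ as $k=qr_n\ell$ with $\ell\in[V]$, and using the identity $\phi_i(x+k)-\phi_i(x)=(\alpha_i+2\beta_i x)k+\beta_i k^2$ with $x=n+qu$, one finds
\begin{align}
\label{eq:lin:planexpand}
	\phi_i(n+m+k)-\phi_i(n+m)
	&=(\alpha_i+2\beta_i n)\,qr_n\ell
	+2\beta_i q^2\,ur_n\ell
	+\beta_i q^2\,r_n^2\ell^2 .
\end{align}
The last two terms are integer multiples of $\beta_i q^2$, while the first involves the twisted coefficients $(\alpha_i+2\beta_i n)q$; this is precisely why $r_n$ must be allowed to depend on $n$.

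First I would apply Proposition~\ref{thm:lin:quadrec} with $d=R$, $\theta_i=\beta_i$ and $X=\lfloor N^{1/4}\rfloor$, obtaining an integer $1\leq q\leq N^{1/4}$ with $\|\beta_i q^2\|\ll RN^{-c/R^2}$ for all $i\in[R]$. I then fix an integer $U\sim N^{c_0/R^2}$, where $c_0$ is a small constant to be chosen below, and an integer $V\sim U^{1/(8R)}$, and set $r_n=1$ for $n\notin[N]$. Using that $ur_n\ell\leq U^{5/4}V$ and $r_n^2\ell^2\leq U^{1/2}V^2$, together with the elementary bound $\|j\theta\|\leq|j|\,\|\theta\|$ for $j\in\Z$, the last two terms of~\eqref{eq:lin:planexpand} are each $\leq\eps\delta/3$ in $\|\cdot\|$, provided $N\geq(2/\eps\delta)^{CR^2}$; the role of taking $c_0$ small is exactly that the saving $N^{-c/R^2}$ in $\|\beta_i q^2\|$ then outweighs the polynomial factors $U^{O(1)}V^{O(1)}$ coming from the ranges of $u$, $r_n$ and $\ell$.

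Next, for each fixed $n\in[N]$ I would apply the classical Dirichlet theorem on simultaneous approximation to the $R$ real numbers $(\alpha_i+2\beta_i n)q$ with approximation parameter $U^{1/4}$, obtaining an integer $1\leq r_n\leq U^{1/4}$ with $\|(\alpha_i+2\beta_i n)qr_n\|\leq U^{-1/(4R)}$ for all $i\in[R]$. Since $V\leq 2U^{1/(8R)}$, the first term of~\eqref{eq:lin:planexpand} then has $\|\cdot\|\leq V\cdot U^{-1/(4R)}\leq 2U^{-1/(8R)}\leq\eps\delta/3$, provided $U\geq(6/\eps\delta)^{8R}$; as $U\sim N^{c_0/R^2}$, this last condition is guaranteed by the hypothesis $N\geq(2/\eps\delta)^{CR^3}$, and this is exactly where the cube in the exponent is needed. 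Summing the three bounds yields $\|\phi_i(n+m+k)-\phi_i(n+m)\|\leq\eps\delta$ for all $n\in[N]$, $m\in P$, $k\in Q_n$ and $i\in[R]$, while the size constraints $q\leq N^{1/4}$, $r_n\leq U^{1/4}$, $U\sim N^{c/R^2}$ and $V\sim U^{c/R}$ hold by construction.

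I expect the main difficulty to lie not in any individual estimate but in the compatibility of the parameter choices. The first round is useful only up to the scale $U\sim N^{c/R^2}$, whose growth rate matches the decay rate $N^{-c/R^2}$ of the error in Proposition~\ref{thm:lin:quadrec}, so $c_0$ must be small enough that this error still dominates all the losses $U^{O(1)}V^{O(1)}$; at the same time $U$ must stay large compared with $1/\eps\delta$ so that the second, linear round succeeds, which is what costs the additional power of $R$ in the exponent of $N$ and produces the hypothesis $N\geq(2/\eps\delta)^{CR^3}$. The two requirements are compatible because the first is scale-invariant in $R$ whereas the second is not. The only remaining point is the replacement of the real targets $N^{c/R^2}$, $U^{c/R}$, $N^{1/4}$ and $U^{1/4}$ by honest integers $U$, $V$, $q$ and $r_n$, which is routine.
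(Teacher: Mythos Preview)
Your argument is correct and follows essentially the same route as the paper: a global application of the Schmidt/Green--Tao quadratic recurrence to find $q$, followed by a local Dirichlet simultaneous approximation at each $n$ to find $r_n$, with the scales $U\sim N^{c/R^2}$ and $V\sim U^{c/R}$ chosen so that the two rounds are compatible. The only cosmetic difference is that the paper packages your three-term expansion~\eqref{eq:lin:planexpand} as $(a^2-b^2)q^2\beta_i+(a-b)\gamma_{i,n}$ with $a=u+r_n\ell$, $b=u$ and $\gamma_{i,n}=q(\alpha_i+2n\beta_i)$, which amounts to the same computation.
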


\begin{proof}
    We consider at first arbitrary integers $n,q,a,b$,
    and we write
    \begin{align}
    \label{eq:lin:phidiff}
        \begin{split}
        &\phantom{=}\ \phi_i(n + qa) - \phi_i(n + qb) \\
        &= \alpha_i \big[ (n + qa) - (n + qb) \big]
        + \beta_i \big[  (n+qa)^2 - (n+qb)^2 \big] \\
        &= (a^2 - b^2) q^2 \beta_i + (a-b)\gamma_{i,n},
        \end{split}
    \end{align}
    where $\gamma_{i,n} = q(\alpha_i + 2n\beta_i)$.
    Via Proposition~\ref{thm:lin:quadrec},
    we now choose $1 \leq q \leq N^{1/4}$  such that
    $\| q^2 \beta_i \| \ll R N^{-c_0/R^2}$ for all $i \in [R]$.
    We also pick an integer $U \sim N^{c_0/4R^2}$
    and we let $a = x + r_n y$ and $b = x$,
    where $x \in [U]$ and $y \in \Z$ is arbitrary, 
    and where $1 \leq r_n \leq U^{1/4}$ are chosen so that
    $\| r_n \gamma_{i,n} \| \leq U^{-c_1/R}$ for all $i \in [R]$ and $n \in \Z$,
    via Dirichlet's theorem on simultaneous linear
    recurrence\cite[Theorem II.1A]{schmidt:diophapprox}.
    If we now pick another integer $V \sim U^{c_1/2R}$ and
    insist that $y,z \in [V]$, it follows
    from~\eqref{eq:lin:phidiff} that
    \begin{align*}
        \| \phi_i(n + qa) - \phi_i(n + qb) \|
        &\leq |a^2-b^2| \cdot \| q^2 \beta_i \|
            + |y-z| \cdot \| r_n \gamma_{i,n} \| \\
        &\ll R U^2 N^{-c_0/R^2} + V U^{-c_1/R} \\
        &\ll R N^{-c_0 / 2R^2} + N^{- c_2 / R^3}
    \end{align*}
    with $c_2 = c_0 c_1 / 8$,
    and the error is less than $\eps\delta$ for $N \geq (2/\eps\delta)^{CR^3}$.
\end{proof}

We chose the sizes of the parameters
in the previous proposition
so that $P \subset [N^{1/2}]$ and $r_n [V] \subset [U^{1/2}]$.
Before proceeding further, we 
recall two standard averaging techniques,
which we use implicitely throughout the section.

\begin{proposition}[Regularity calculus]
\label{thm:lin:reg}
    Let $\eps \in (0,1]$ be a parameter
    and let $f : \Z \rightarrow \C$ be a function.
    Consider two arithmetic progressions 
    $P = q [N]$ and $P' \subset q [N']$,
    where $q,N,N' \geq 1$.
    Then
    \begin{align*}
        &\textrm{\upshape [Shifting]} &
        \E_{n \in P} f(n + n') 
        &= \E_{n \in P} f(n) + O\big( \tfrac{N'}{N} \| f \|_\infty \big)	
        &&\forall\ n' \in P', \\
        &\text{\upshape [Sub-averaging]} &
        \E_{n \in P,\, n' \in P'} f(n + n')
        &= \E_{n \in P} f(n) + O\big( \tfrac{N'}{N} \| f \|_\infty \big).
        &&
\end{align*}
\end{proposition}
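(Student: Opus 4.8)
The plan is to prove \textrm{[Shifting]} directly by a symmetric-difference count on the index sets, and then deduce \textrm{[Sub-averaging]} simply by averaging the first estimate over $n' \in P'$.

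First I would fix $n' \in P' \subset q[N']$ and write $n' = qj$ with $1 \leq j \leq N'$, and parametrize $P = q[N]$ by $n = qm$ with $m \in [N]$. Then
\[
	\E_{n \in P} f(n+n')
	= \frac1N \sum_{m=1}^N f\big(q(m+j)\big)
	= \frac1N \sum_{m=1+j}^{N+j} f(qm),
\]
while $\E_{n \in P} f(n) = \frac1N \sum_{m=1}^N f(qm)$. The two ranges $\{1+j,\dots,N+j\}$ and $\{1,\dots,N\}$ coincide outside a set of at most $2j \leq 2N'$ integers, so subtracting the two sums leaves at most $2N'$ surviving terms, each of modulus at most $\|f\|_\infty$; dividing by $N$ yields the claimed bound $O\big(\tfrac{N'}{N}\|f\|_\infty\big)$, and this is uniform in $n' \in P'$.

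For \textrm{[Sub-averaging]} I would average the \textrm{[Shifting]} estimate over $n' \in P'$: by Fubini the left-hand side $\E_{n \in P,\, n' \in P'} f(n+n')$ equals $\E_{n' \in P'}\big(\E_{n \in P} f(n+n')\big)$, and since the error term in \textrm{[Shifting]} is uniform in $n' \in P'$, this is $\E_{n \in P} f(n) + O\big(\tfrac{N'}{N}\|f\|_\infty\big)$, as desired.

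There is no genuine obstacle here; the only point needing a little care is the bookkeeping of the shifted index ranges, together with the observation that $P'$ being merely \emph{contained in} (rather than equal to) $q[N']$ still forces every element of $P'$ to have the form $qj$ with $j \leq N'$ — which is all the argument uses.
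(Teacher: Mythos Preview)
Your argument is correct; the paper does not prove this proposition at all, merely recording it as a standard averaging fact to be used later. The symmetric-difference count you give is exactly the expected justification, and your deduction of [Sub-averaging] from [Shifting] by averaging the uniform error is the natural route.
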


The next proposition allows us to approximate
the quadratic Fourier transform of a function 
by the transform of an additively 
smoothed version of itself,
at several frequencies at once.
Since we have the function $f_A/\delta$ in mind
we assume an $L^\infty$ bound of the form $\delta^{-1}$ below.
We also adopt a handy notation:
for complex numbers $X,Y$
we write $X \approx_\eps Y$ when $|X-Y| \ll \eps$.

\begin{proposition}[Additive smoothing]
\label{thm:lin:smoothing}
    Let $\eps,\delta \in (0,1]$ be parameters,
    and assume that $N \geq (2/\eps\delta)^{CR^3}$ and
    $P,(Q_n)_{n \in \Z}$ are as in Proposition~\ref{thm:lin:simultlin}.
    Given a function $h : [N] \rightarrow \C$, define 
	$\wt{h} : [N] \rightarrow \C$ by
    \begin{align}
    \label{eq:lin:smoothing}
        \wt{h}(n) = \E_{m \in P,\, k \in Q_{n-m}} h(n + k).
    \end{align}
    Provided that $\| h \|_\infty \leq \delta^{-1}$,
    we then have
    \begin{align}
    	\label{eq:lin:phasesmoothing}
        \E_{n \in [N]} h(n)e(\phi_i(n))
        &\approx_\eps
        \E_{n \in [N]} \wt{h}(n)e(\phi_i(n))
		\qquad
		\text{for all $i \in [R]$,}
        \\
        \label{eq:lin:avgsmoothing}
        \E_{n \in [N]} h(n)
        &\approx_\eps
        \E_{n \in [N]} \wt{h}(n).
    \end{align}
\end{proposition}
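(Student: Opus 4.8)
The plan is to prove \eqref{eq:lin:phasesmoothing} first and then obtain \eqref{eq:lin:avgsmoothing} as the special case where the phase is trivial (formally, one can run the same argument with $\phi_i$ replaced by $0$, which is harmless since nothing in the estimate used the specific shape of $\phi_i$ beyond the near-constancy provided by Proposition~\ref{thm:lin:simultlin}). So fix $i \in [R]$. Unfolding the definition \eqref{eq:lin:smoothing} of $\wt{h}$ and interchanging the finite averages, we have
\begin{align*}
    \E_{n \in [N]} \wt{h}(n) e(\phi_i(n))
    = \E_{m \in P}\ \E_{k \in Q_{n-m}}\ \E_{n \in [N]} h(n+k) e(\phi_i(n)),
\end{align*}
where one must be slightly careful because $Q_{n-m}$ depends on $n$; the cleanest route is to write the inner double average over $n$ and $k$ as a single average over the set of pairs and compare it, for each fixed $m \in P$, to $\E_{n\in[N]} h(n) e(\phi_i(n))$. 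The point is that shifting $n \mapsto n+k$ with $k \in Q_{n-m} \subset q r_{n-m}[V] \subset q[U^{1/2}]$ and $m \in P = q[U] \subset q[N^{1/2}]$ moves $n$ by at most $O(N^{1/2})$, so by the Shifting bound in Proposition~\ref{thm:lin:reg} applied to the function $n \mapsto h(n) e(\phi_i(n))$ (whose sup norm is $\leq \delta^{-1}$) we pay only $O(N^{-1/2}\delta^{-1})$ per shift — negligible. The genuine gain, however, comes from replacing $e(\phi_i(n+k))$ by $e(\phi_i(n+m))$: Proposition~\ref{thm:lin:simultlin} guarantees $\|\phi_i(n+m+k) - \phi_i(n+m)\| \leq \eps\delta$ for all $n \in [N], m \in P, k \in Q_n$, hence $e(\phi_i(n+m+k)) = e(\phi_i(n+m)) + O(\eps\delta)$.

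The key steps, in order, are: (1) unfold $\wt h$ and reorganize the averages so that for each fixed $m \in P$ we are comparing $\E_{n,k} h(n+k)e(\phi_i(n+k))$ against $\E_{n,k} h(n+k)e(\phi_i(n+m+k))$ — wait, more precisely: start from $\E_n h(n)e(\phi_i(n))$, use Sub-averaging over $m\in P$ to write it as $\E_{m\in P}\E_n h(n)e(\phi_i(n)) + O(N^{-1/2}\delta^{-1})$, then Shifting to replace $h(n)e(\phi_i(n))$ by $h(n+m+k)e(\phi_i(n+m+k))$ inside the $n$-average at cost $O(N^{-1/2}\delta^{-1})$ per step; (2) invoke Proposition~\ref{thm:lin:simultlin} to swap $e(\phi_i(n+m+k))$ for $e(\phi_i(n+m))$ at cost $O(\eps\delta)\cdot\|h\|_\infty = O(\eps)$; (3) apply Shifting once more to remove the $m$ from the phase, reducing to $e(\phi_i(n+k))$ at cost $O(N^{-1/2}\delta^{-1})$ (again using near-constancy of $\phi_i$ along $P$); (4) recognize the resulting expression $\E_{m\in P}\E_{n}\E_{k\in Q_{n}} h(n+m+k)e(\phi_i(n+k))$ — after a final relabeling $n \mapsto n-m$ inside the $k$-index and a sub-average collapse — as exactly $\E_n \wt h(n) e(\phi_i(n))$; (5) collect the error terms, all of which are $O(\eps) + O(N^{-1/2}\delta^{-1})$, and observe that the latter is $\ll \eps$ under the hypothesis $N \geq (2/\eps\delta)^{CR^3}$, which in particular forces $N^{-1/2}\delta^{-1} \leq \eps$.

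The main obstacle is bookkeeping rather than any real analytic difficulty: one must line up the index sets carefully so that the shift variable and the linearization variable ($m \in P$ versus $k \in Q_{n-m}$) play their roles correctly, keeping track that $Q_n$ depends on $n$ (so that, e.g., $\E_{n\in[N]}\E_{k\in Q_{n-m}}$ is not literally a product average) and that every reindexing stays within the tolerances of the Regularity calculus, i.e. every shift has length $\ll N^{1/2}$ relative to $P$ and relative to $Q_n$. A secondary subtlety is ensuring the order in which one peels off $m$ from the phase: one should introduce $m$ into the phase via near-constancy, use it to absorb the $k$-dependence, then remove it again — the two phase-replacement steps both rely on Proposition~\ref{thm:lin:simultlin}, the first with the full shift $m+k$ and the second with just $m$. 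Once the averages are correctly arranged, each error estimate is a one-line application of Proposition~\ref{thm:lin:reg} or Proposition~\ref{thm:lin:simultlin}, and the proof closes.
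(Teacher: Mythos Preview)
Your overall strategy matches the paper's: start from $\E_n h(n)e(\phi_i(n))$, sub-average to introduce $m\in P$ and $k\in Q_n$, apply Proposition~\ref{thm:lin:simultlin} to replace $e(\phi_i(n+m+k))$ by $e(\phi_i(n+m))$, then undo the shifts to land on $\wt h$. Steps~(1)--(2) are fine and coincide with what the paper does.

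The genuine problem is your step~(3). You claim to pass from $e(\phi_i(n+m))$ to $e(\phi_i(n+k))$ (or to $e(\phi_i(n))$, depending on how one reads it) by ``near-constancy of $\phi_i$ along $P$''. But Proposition~\ref{thm:lin:simultlin} gives no such thing: it asserts $\|\phi_i(n+m+k)-\phi_i(n+m)\|\leq\eps\delta$, i.e.\ near-constancy along the \emph{small} progressions $Q_n$ based at $n+m$, and says nothing about $\|\phi_i(n+m)-\phi_i(n)\|$ for $m\in P$. Indeed, in the proof of Proposition~\ref{thm:lin:simultlin} only $q^2\beta_i$ is made small, not $q\alpha_i$, so $\phi_i$ is genuinely \emph{not} nearly constant along $P$. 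Your step~(4) then tries to massage the resulting (incorrect) expression into $\E_n\wt h(n)e(\phi_i(n))$ via a ``relabeling inside the $k$-index'' and a ``sub-average collapse'', neither of which is a legitimate operation as written.

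The fix is simpler than what you attempt. After step~(2) you have
\[
\E_{m\in P}\,\E_{n\in[N]}\,\E_{k\in Q_n} h(n+m+k)\,e(\phi_i(n+m)),
\]
and since $Q_n = Q_{(n+m)-m}$ trivially, the inner expression is a function of $n+m$ alone. Now apply Shifting in the $n$-variable at fixed $m$ (cost $O(qU/N\cdot\delta^{-1})\ll\eps$) to substitute $n'=n+m$, obtaining directly
\[
\E_{m\in P}\,\E_{n'\in[N]}\,\E_{k\in Q_{n'-m}} h(n'+k)\,e(\phi_i(n')) = \E_{n'\in[N]}\wt h(n')\,e(\phi_i(n')).
\]
No second invocation of Proposition~\ref{thm:lin:simultlin} is needed, and no phase manipulation beyond step~(2) occurs. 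This is exactly the paper's route.
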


\begin{proof}
Sub-averaging twice, 
using the bound $N \geq (\eps\delta)^{-CR^2}$, we derive
\begin{align*}
	I &\coloneqq
	\E_{n \in [N]} h(n) e( \phi_i(n) ) \\
	&\approx_{\eps} \E_{n \in [N]} \E_{m \in P} \E_{k \in Q_n}
	h(n + m + k) e( \phi_i(n + m + k) ).
\end{align*}
By Proposition~\ref{thm:lin:simultlin}, we thus have
\begin{align*}
	I &\approx_{\eps} \E_{n \in [N],\, m \in P,\, k \in Q_n}
	h(n + m + k) e( \phi_i (n+m) ) \\
	&= \E_{m \in P} \E_{n \in [N]} \Big[ \E_{k \in Q_{(n+m) - m}}
				h( (n + m) + k ) e( \phi_i(n + m) ) \Big].
\end{align*}
By shifting in $n$ at fixed $m$, using the bound $N \geq (\eps\delta)^{-2}$, we obtain
\begin{align*}
	I  \approx_\eps \E_{m \in P} \E_{n \in [N]} \E_{k \in Q_{n-m}}
	h( n + k ) e( \phi_i(n) )
\end{align*}
After reordering variables, we have 
$I \approx_\eps \E_{n \in [N]} \wt{h}(n) e( \phi_i(n) )$
as expected, and an identical computation gives
an analog approximation where $e(\phi_i(n))$ is replaced by~$1$.
\end{proof}

We can now perform the promised transformation
of the function $f_A/\delta$ in Corollary~\ref{thm:energy:restrenergy} into its smoothed
version, which is then guaranteed to have a large second moment via
the $\ell^s \,\text{--}\, L^2$ restriction estimate of Section~\ref{sec:restr}.

\begin{proposition}
\label{thm:lin:largeL2}
	Let $\eps,\delta \in (0,1]$ be parameters,
	and assume that $N \geq (2/\eps\delta)^{CR^3}$
	and $P,(Q_n)_{n \in \Z}$ are as in Proposition~\ref{thm:lin:simultlin}.
	Define $h \mapsto \wt{h}$ as in~\eqref{eq:lin:smoothing}
	Suppose also that
	$A$ is a subset of $[N]$ of density $\delta$ such that
	\begin{align}
	\label{eq:lin:largeenergy}
		R^c \ll \sum_{i=1}^R |S_{f_A / \delta}(\bfz_i)|^{6.1}.
	\end{align}
	Provided that $\eps \leq c/R$, we then have
	\begin{align*}
		R^c \ll \bigg\| \frac{\wt{f_A}}{\delta} \bigg\|_{L^2(N)}^2.
	\end{align*}
\end{proposition}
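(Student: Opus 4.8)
The plan is to insert the smoothing operator into the restricted energy bound~\eqref{eq:lin:largeenergy}, then complete the sum and apply the discrete restriction estimate of Proposition~\ref{thm:restr:restrbound}. First I would apply Proposition~\ref{thm:lin:smoothing} with the function $h = f_A/\delta$, which satisfies $\|h\|_\infty \leq \delta^{-1}$ after possibly adjusting the constant (since $\|f_A\|_\infty \leq 1$). Writing $\phi_i(n) = \alpha_i n + \beta_i n^2$ as set up before~\eqref{eq:lin:quadratics}, the estimate~\eqref{eq:lin:phasesmoothing} gives
\begin{align*}
	S_{f_A/\delta}(\bfz_i) = \E_{n\in[M]} \frac{f_A(n)}{\delta} e(\phi_i(n))
	\approx_\eps \E_{n\in[M]} \frac{\wt{f_A}(n)}{\delta} e(\phi_i(n))
	= S_{\wt{f_A}/\delta}(\bfz_i)
\end{align*}
for every $i \in [R]$; here one must be slightly careful that the sum defining $S_f$ in~\eqref{eq:setup:Ssum} is over $[M]$ while Proposition~\ref{thm:lin:smoothing} averages over $[N]$, but since both $f_A$ and $\wt{f_A}$ are supported in $[N]$ (note $P,Q_n \subset [N^{1/2}]$ so $n+k \in [N]$ whenever $n \in [N]$), the averages over $[M]$ and $[N]$ differ only by the fixed factor $N/M$, which is harmless.

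Next I would feed this pointwise comparison into~\eqref{eq:lin:largeenergy}. Since $|S_{f_A/\delta}(\bfz_i)| \leq 1$ (each $|f_A| \leq 1$ and $\delta \geq$ the density, actually $|S_{f_A/\delta}|$ is bounded by a constant depending on $\delta$, but the $\ell^{6.1}$ bound $\sum_i |S_{f_A/\delta}(\bfz_i)|^{6.1} \ll \delta^{-6.1/2}$ from Proposition~\ref{thm:restr:restrbound} together with $R \ll \delta^{-C}$ keeps everything polynomial in $\delta^{-1}$), the elementary inequality $|X^{6.1} - Y^{6.1}| \ll (|X|^{5.1}+|Y|^{5.1})|X-Y|$ lets me replace each term at a total cost of $O(R \cdot \delta^{-C}\eps)$. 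Choosing $\eps \leq c R^{-1} \delta^{C}$ small enough — which is permitted since the hypothesis $N \geq (2/\eps\delta)^{CR^3}$ can still be met (this is exactly why the proposition only requires $\eps \leq c/R$ in its statement, with the $\delta$-dependence absorbed into the ambient largeness of $N$) — this error is at most half the main term, and I obtain
\begin{align*}
	R^c \ll \sum_{i=1}^R |S_{\wt{f_A}/\delta}(\bfz_i)|^{6.1}.
\end{align*}

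Now I complete the sum trivially: since the $\bfz_i$ are distinct elements of $\Z_M \times \Z_{M^2}$,
\begin{align*}
	R^c \ll \sum_{\bfz \in \Z_M \times \Z_{M^2}} |S_{\wt{f_A}/\delta}(\bfz)|^{6.1} = \|S_{\wt{f_A}/\delta}\|_{6.1}^{6.1}.
\end{align*}
Applying Proposition~\ref{thm:restr:restrbound} with $p = 6.1 > 6$ to the function $\wt{f_A}/\delta$ (which is supported in $[N] \subset [M]$) yields $\|S_{\wt{f_A}/\delta}\|_{6.1} \ll \|\wt{f_A}/\delta\|_{L^2(M)}$, hence $R^c \ll \|\wt{f_A}/\delta\|_{L^2(M)}^{6.1}$. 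Converting from $L^2(M)$ to $L^2(N)$ costs only the bounded factor $M/N$, and since $6.1$ is a fixed exponent, $R^c \ll \|\wt{f_A}/\delta\|_{L^2(N)}^2$ after relabelling the constant $c$. This gives the claimed bound.

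The step I expect to be the main obstacle is the bookkeeping around the error term when passing from $|S_{f_A/\delta}|^{6.1}$ to $|S_{\wt{f_A}/\delta}|^{6.1}$: one needs the $\eps$ gained from additive smoothing to beat the polynomial-in-$\delta^{-1}$ losses coming both from the bound $R \ll \delta^{-C}$ on the number of frequencies and from the a priori size $\|S_{f_A/\delta}\|_{6.1}^{6.1} \ll \delta^{-C}$, while simultaneously keeping the requirement $N \geq (2/\eps\delta)^{CR^3}$ compatible with the hypothesis of the proposition. This is a purely quantitative juggling act — the constants $c$ and $C$ must be chosen in the right order ($\eps$ last, after $R$ and the implied constants are fixed) — but no new idea is needed beyond what Proposition~\ref{thm:lin:smoothing} already provides.
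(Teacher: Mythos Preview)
Your approach matches the paper's: apply the smoothing approximation~\eqref{eq:lin:phasesmoothing}, pass the error through the $6.1$-th powers, complete the sum over $\Z_M \times \Z_{M^2}$, and invoke Proposition~\ref{thm:restr:restrbound}. The rescaling between $[M]$ and $[N]$ is handled exactly as you indicate.

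The one place your argument does not quite deliver the proposition as stated is the error bookkeeping. You use the mean-value inequality $|X^{6.1}-Y^{6.1}| \ll (|X|^{5.1}+|Y|^{5.1})|X-Y|$, which costs you a factor $\delta^{-5.1}$ from the trivial bound $|S_{f_A/\delta}| \ll \delta^{-1}$; summed over $R$ terms this is $O(R\delta^{-C}\eps)$, and to absorb it you would need $\eps \leq c\delta^{C}/R$, strictly stronger than the hypothesis $\eps \leq c/R$. Your parenthetical that ``the $\delta$-dependence [is] absorbed into the ambient largeness of $N$'' is not right: the error term lives on the frequency side and depends on $\eps$, not on $N$. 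The paper instead uses the cruder-looking but more efficient inequality $|x+y|^{6.1} \ll |x|^{6.1} + |y|^{6.1}$, which turns the total error into $O(R\eps^{6.1})$; since the main term is $\gg R^c \geq 1$, the condition $\eps \leq c/R$ alone (giving $R\eps^{6.1} \leq c^{6.1}R^{-5.1}$) suffices with no $\delta$-loss. Swapping your inequality for this one fixes the issue with no other change.
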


\begin{proof}
Since $M \sim CN$ and the functions under consideration
are supported in $[N]$, we can pass from averages
over $[M]$ to averages over $[N]$ and back, 
losing only a constant factor in the process.
We rescale the averages~\eqref{eq:lin:quadratics} to $[N]$
and apply~\eqref{eq:lin:phasesmoothing} to $h = f_A/\delta$,
so that by the inequality $|x+y|^{6.1} \ll |x|^{6.1} + |y|^{6.1}$ we have
\begin{align*}
	c R^c - C R \eps^{6.1} \leq 
	\sum_{i = 1}^R \bigg| \E_{n \in [N]} \frac{\wt{f_A}}{\delta}(n) e( \phi_i(n) ) \bigg|^{6.1}.
\end{align*}
Assuming that $\eps \leq c/R$ with $c$ small enough (say), 
the left-hand side is $\gg 1$.
Rescaling the average on the right-hand side to $[M]$
and completing the sum, we can apply the restriction estimate
from Proposition~\ref{thm:restr:restrbound}
and rescale back to $[N]$ to finish the proof.
\end{proof}

We know from the last proposition that
the balanced function of our set, averaged over 
a family of small progressions,
has a large $L^2$ norm.
One can pass from this information to a density increment
by a standard argument combining regularity computations 
and an $L^{\infty}-L^{1}$ bound,
much as in the Bohr set setting of 
Roth's theorem~\cite[Proposition~2]{me:bourgainroth}.
At this stage we rescale norms to the interval $[N]$
on which our functions naturally live.

\begin{proposition}[$L^2$ density increment]
\label{thm:lin:L2dincr}
	Let $\eps,\delta \in (0,1]$ and $\nu > 0$ be parameters,
	and suppose that ${N \geq (2/\eps\delta)^{CR^3}}$ and
	$P,(Q_n)_{n \in \Z}$ 
	are as in Proposition~\ref{thm:lin:simultlin},
	$h \mapsto \wt{h}$ is defined by~\eqref{eq:lin:smoothing},
	and $A$ is a subset of $[N]$ of density $\delta$ such that
	\begin{align*}
		\bigg\| \frac{\wt{f_A}}{\delta} \bigg\|_{L^2(N)}^2 \geq \nu.
	\end{align*}
	If $\eps \leq c\nu\delta$,
	there exists an arithmetic progression
	$Q$ contained in $[N]$ such that
	\begin{align*}
		|A \cap Q| / |Q| &\geq (1 + \nu/2) \cdot \delta, \\
		|Q| &\geq N^{c/R^3}.
	\end{align*}
\end{proposition}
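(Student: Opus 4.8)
The plan is to convert the large $L^2$ mass of $\wt{f_A}$ into a dense progression by a variance argument of Roth--Heath-Brown type. Write $g = \wt{f_A}/\delta$, so the hypothesis reads $\E_{n\in[N]} g(n)^2 \geq \nu$. Three facts will drive the argument: (a) pointwise $g \geq -1$, since $\wt{1_A}\geq 0$ and $\delta\wt{1_{[N]}}\leq\delta$ (and in fact $|g|\leq\delta^{-1}$ everywhere as $\wt{f_A}\in[-\delta,1]$); (b) $\E_{n\in[N]} g(n) = O(\eps)$, obtained by applying \eqref{eq:lin:avgsmoothing} to $h = f_A/\delta$ (admissible since $\|h\|_\infty\leq\delta^{-1}$) together with $\E_{[N]} f_A = 0$; and (c) on the bulk $B = [1, N-\lceil N^{1/2}\rceil]$ one has the clean identity $\wt{1_A}(n) = \wt{f_A}(n)+\delta$, and $\wt{1_A}(n)$ is literally the $P$-average over $m\in P$ of the densities $|A\cap(n+Q_{n-m})|/|Q_{n-m}|$. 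Fact (c) uses the size conventions recorded after Proposition~\ref{thm:lin:simultlin}, by which every element of $P$ and of each $Q_n$ is at most $N^{1/2}$, so that $n+Q_{n-m}\subset[N]$ whenever $n\in B$; each such progression has length $|Q_{n-m}| = V \sim N^{c/R^3}$.

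Next I would discard the edge: since $|[N]\smallsetminus B|\leq N^{1/2}$ and $|g|\leq\delta^{-1}$, the constraint $N\geq(2/\eps\delta)^{CR^3}$ together with $\eps\leq c\nu\delta$ forces $\nu\delta^2\gg N^{-1/C}$, which, after enlarging the fixed constant $C$ if necessary, makes $N^{-1/2}\delta^{-2} = o(\nu)$; hence $\E_{n\in B}\, g(n)^2 \geq (1-o(1))\nu$ and $\E_{n\in B}\, g(n) = O(\eps)+o(\nu)$. Now the $L^\infty$--$L^1$ step: assume for contradiction that $|A\cap(n+Q_{n-m})|/|Q_{n-m}| < (1+\nu/2)\delta$ for all $n\in B$ and all $m\in P$, so that $g(n)<\nu/2$ on $B$. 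Splitting $g = g_+ - g_-$ and using $0\leq g_+<\nu/2$, $0\leq g_-\leq 1$, and fact (b) restricted to $B$, one obtains $\E_B g_+^2 \leq \tfrac{\nu}{2}\E_B g_+$ and $\E_B g_-^2 \leq \E_B g_- = \E_B g_+ + O(\eps) + o(\nu)$; since also $\E_B g_+\leq\nu/2$, adding these and using $\nu\leq 1$ gives $\E_B g^2 \leq (1+\tfrac{\nu}{2})\tfrac{\nu}{2} + O(\eps) + o(\nu) \leq \tfrac34\nu + O(\eps) + o(\nu)$, contradicting $\E_B g^2\geq(1-o(1))\nu$ once $\eps\leq c\nu\delta$ with $c$ small and $N$ large. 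Therefore some $n_0\in B$ satisfies $\wt{1_A}(n_0)\geq(1+\nu/2)\delta$, and pigeonholing over $m\in P$ yields $m_0\in P$ for which $Q := n_0 + Q_{n_0-m_0}$ has $|A\cap Q|/|Q|\geq(1+\nu/2)\delta$; by construction $Q\subset[N]$ and $|Q| = V \sim N^{c/R^3}$, which after shrinking $c$ slightly is the asserted bound.

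The only genuinely delicate point is the numerology: to land the increment $1+\nu/2$ --- rather than one degraded by a spurious factor of $\nu$ or $\delta$ --- the edge loss in the second paragraph and the stray term $O(\eps)$ coming from the mean of $\wt{f_A}$ must both be honestly $o(\nu)$, and this is exactly what the hypotheses $N\geq(2/\eps\delta)^{CR^3}$ (with $C$ large) and $\eps\leq c\nu\delta$ are calibrated to guarantee. Everything else --- the sub-averaging identities tacitly used in passing between $\wt{f_A}$ and the individual progressions, via Proposition~\ref{thm:lin:reg}, and the elementary convexity inequalities above --- is routine, and follows the Bohr-set density-increment argument of~\cite{me:bourgainroth} closely.
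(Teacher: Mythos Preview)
Your argument is sound but takes a different route from the paper's. The paper unfolds $\wt{f_A}=\wt{1_A}-\delta\wt{1_{[N]}}$ inside the square, shows via~\eqref{eq:lin:mollifindic} and~\eqref{eq:lin:avgsmoothing} that the cross term and the $\wt{1_{[N]}}$ term both equal $1+O(\eps)$, and is left with $\|\wt{\mu_A}\|_{L^2(N)}^2 \geq 1+\nu-O(\eps)$ where $\wt{\mu_A}=\wt{1_A}/\delta\geq 0$. It then applies the $L^\infty$--$L^1$ inequality directly to this \emph{nonnegative} function: since $\E_{[N]}\wt{\mu_A}\approx_\eps 1$, one gets $\|1_E\wt{\mu_A}\|_\infty\geq 1+\nu-O(\eps/\delta)$ in one line, with no splitting into positive and negative parts. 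Your variance argument with $g_\pm$ is equally standard and reaches the same endpoint, but is slightly more laborious; the paper's version buys a cleaner constant and, more importantly, works uniformly for all $\nu>0$.

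On that last point: you invoke $\nu\leq 1$ to pass from $(1+\nu/2)(\nu/2)$ to $\tfrac34\nu$, but the proposition does not assume this, and in the paper's application $\nu\asymp R^{c}$ can be large. Your contradiction breaks down once $\nu\geq 2$, since then $(1+\nu/2)(\nu/2)\geq\nu$. This is easily repaired---for instance by noting that on $B$ one has $g+1=\wt{\mu_A}\geq 0$ and running $L^\infty$--$L^1$ on $g+1$ rather than splitting $g$---but as written it is a gap relative to the stated proposition. Everything else (the edge estimate, the use of~\eqref{eq:lin:avgsmoothing}, the pigeonhole over $m\in P$, the containment $n_0+Q_{n_0-m_0}\subset[N]$) matches the paper and is correct.
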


\begin{proof}
We write $\| h \|_2 = \| h \|_{L^2(N)}$
for functions $h : [N] \rightarrow \C$ for conciseness.
Before engaging in computations notice that,
by~\eqref{eq:lin:smoothing} and considering the
lengths of the arithmetic progressions $(Q_{n})_{n \in \Z}$,
we have, for every $n \in [N]$,
\begin{align}
\label{eq:lin:mollifindic}
\wt{1_{[N]}} (n) =
	\begin{cases}
		= 1 & \text{if $n \in [ 1 , N - N^{1/2} [$}, \\
		\in [0,1] & \text{always}.
	\end{cases}
\end{align}
Therefore $\wt{1_{[N]}}$
acts as a mollified indicator function of $[N]$.
Recalling next that $f_A = 1_A - \delta 1_{[N]}$,
we can unfold the $L^2$ norm in the proposition to obtain
\begin{align}
\label{eq:lin:L2unfolding}
	\nu \leq \bigg\| \frac{\wt{1_A}}{\delta} \bigg\|_2^2
	- 2 \bigg\langle \frac{\wt{1_A}}{\delta} , \wt{1_{[N]}} \bigg\rangle
	+ \| \wt{1_{[N]}} \|_2^2.
\end{align}
A quick computation using~\eqref{eq:lin:mollifindic}
and~\eqref{eq:lin:avgsmoothing} yields
\begin{align}
\label{eq:lin:L2midterms}
	\bigg\langle \frac{\wt{1_A}}{\delta} , \wt{1_{[N]}} \bigg\rangle 
	\approx_\eps 1,
	\qquad
	 \| \wt{1_{[N]}} \|_2^2 \approx_\eps 1.
\end{align}
We want to avoid certain overwrapping scenarios
near the edges of our interval, 
and to do so we introduce the auxiliary
set $E = [1,N - N^{1/2}[$.
Writing $\mu_A = 1_A / \delta$ and using the
bound $N \geq \eps^{-2}\delta^{-4}$, we then have
\begin{align*}
	\bigg\| \frac{\wt{1_A}}{\delta} \bigg\|_2^2
	&= \E_{n \in [N]} 1_E(n) \wt{\mu_A}(n)^2 + O(\eps) \\
	&\leq \| 1_E \wt{\mu_A} \|_\infty \| \wt{\mu_A} \|_1 + O(\eps).
\end{align*}
Applying~\eqref{eq:lin:avgsmoothing} to $h = \mu_A$, we deduce that
\begin{align}
	\notag
	\bigg\| \frac{\wt{1_A}}{\delta} \bigg\|_2^2
	&\leq \| 1_E \wt{\mu_A} \|_\infty \| \mu_A \|_1 + O(\eps/\delta) \\
	\label{eq:lin:L2mainterm}
	&= \| 1_E \wt{\mu_A} \|_\infty + O(\eps/\delta).
\end{align}
Inserting~\eqref{eq:lin:L2midterms}
and~\eqref{eq:lin:L2mainterm} in~\eqref{eq:lin:L2unfolding},
we obtain
\begin{align*}
	\nu \leq \| 1_E \wt{\mu_A} \|_\infty - 1 + O( \eps / \delta ).
\end{align*}
Assuming that $\eps \leq c\nu\delta$,
it follows that there exists $n \in E$ such that
\begin{align*}
	1 + \nu/2 \leq \E_{m \in P} \E_{ k \in Q_{n-m}} \mu_A (n + k).
\end{align*} 
By the pigeonhole principle, we may therefore find
$m \in P$ such that
\begin{align*}
	(1 + \nu/2) \cdot \delta 
	\leq \E_{k \in Q_{n-m}} 1_A (n + k) 
	= \frac{|A \cap ( n + Q_{n-m})|}{|n + Q_{n-m}|}.
\end{align*}
Given our choice of $n \in E$ and the 
bounds on the size of parameters 
in Proposition~\ref{thm:lin:simultlin},
we are guaranteed that $n + Q_{n-m}$
is contained in $[N]$, and this concludes the proof.
\end{proof}

\section{Assembling all the pieces}
\label{sec:pieces}

In this section we finish the proof
of Proposition~\ref{thm:dincr:itlemma},
gathering the main statements of the two previous sections
and assigning explicit values to parameters.
As explained in Section~\ref{sec:dincr},
this completes the proof of Theorem~\ref{thm:intro:mainthm}.

\textit{Proof of Proposition~\ref{thm:dincr:itlemma}.}
By Corollary~\ref{thm:energy:restrenergy},
we can find a parameter $1 \leq R \ll \delta^{-C}$
and distinct frequencies $\bfz_1,\dotsc,\bfz_R$ such that
\begin{align*}
	R^c \ll \sum_{i=1}^R |S_{f_A/\delta}(\bfz_i)|^{6.1}.
\end{align*}
We assume that these frequencies
are those that we started with
at the beginning of Section~\ref{sec:lin},
and we fix $\eps = c\delta/R$.
Proposition~\ref{thm:dincr:itlemma} then follows
at once from Propositions~\ref{thm:lin:largeL2}
and~\ref{thm:lin:L2dincr}.
\qed

\section{Final remarks}
\label{sec:rks}

We make a few last technical comments on our method
in this section.
Consider a subset $A$ of $[N]$ of density $\delta$
containing no non-trivial solutions
to~\eqref{eq:intro:quadsyst}.
If we could draw a conclusion of the form
$R^c \ll \sum_{i = 1}^R |S_{1_A/\delta}(\bfz_i)|^s$
for distinct \textit{non-zero} frequencies $\bfz_1,\dotsc,\bfz_R$,
then we could use Keil's~\cite{keil:diagonal} restriction estimate
$\|S_f\|_s \ll \| f \|_\infty$ 
to obtain a slight simplification in our argument.
Indeed we could then deduce a density increment from
the lower bound $\| \wt{1_A} / \delta \|_{\infty} \geq 1 + cR^c$
(with the notation of Section~\ref{sec:lin}),
without resorting to any regularity computations.
However we do not see how to do this at present,
and the information $\| \wt{f_A} / \delta \|_\infty \gg R^c$
is hard to exploit by itself:
it provides either a density increment or
a density decrement, without the means to
decide between the two.

Finally, we close this article with a prediction
inspired by a remark of Smith~\cite[p.~276]{smith:diagonal}.
We remark that the density increment strategy employed
here can be adapted to handle any translation/dilation-invariant 
system of equations of the form
\begin{align}
\label{eq:rks:waringsyst}
\begin{split}
	\lambda_1 n_1 + \dotsb + \lambda_s n_s &= 0 \\
	&\vdots \\
	\lambda_1 n_1^k + \dotsb + \lambda_s n_s^k &= 0,
\end{split}
\end{align}
where $k \geq 1$ and $\lambda_1 + \dotsb + \lambda_s = 0$,
under two additional conditions:
that the classical circle method succeeds in
bounding by below the number of solutions 
to~\eqref{eq:rks:waringsyst} in $[N]$,
and that the discrete exponential sums
\begin{align*}
	W_f(x_1,\dotsc,x_k) = 
	\E_{n \in [M]} f(n) e\Big( \frac{x_1 n}{M} + \dotsb + \frac{x_k n^k}{M^k} \Big)
\end{align*} 
again satisfy a restriction estimate of the form
\begin{align*}
	\| W_f \|_s \ll_s \| f \|_{L^2(M)}.
\end{align*}
It may well be that both these conditions are met
for $s$ large enough with respect to $k$.

\bibliographystyle{amsplain}
\bibliography{logkeil_arxiv3}

\bigskip

\textsc{\footnotesize Department of mathematics,
University of British Columbia,
Room 121, 1984 Mathematics Road,
Vancouver BC V6T 1Z2, Canada
}

\textit{\small Email address: }\texttt{\small khenriot@math.ubc.ca}

\end{document}